\newtheorem{theorem}{Theorem}[section]
\newtheorem{corollary}[theorem]{Corollary}
\newtheorem{lemma}[theorem]{Lemma}
\begin{document}

\title[The Mangoldt function and the zeros of $\zeta(s)$]{The Mangoldt function and the non-trivial zeros \\ of the Riemann zeta function}
\author{Jesús Guillera}
\address{Department of Mathematics, University of Zaragoza, 50009 Zaragoza, SPAIN}
\email{jguillera@gmail.com}
\date{}

\maketitle

\begin{abstract}
We prove a formula for the Mangoldt function which relates it to a sum over all the non-trivial zeros of the Riemann zeta function, in addition we analyze a truncated version of it.
\end{abstract}

\section{Notation}
We use the notation $\rho=\beta+i \gamma$ for the non-trivial zeros of the zeta function. Following Riemann, we define $\alpha=-i (\rho-\frac12)$. Observe that $\rho=1/2+i \alpha$ with ${\rm Re}(\alpha)=\gamma$ and ${\rm Im}(\alpha)=1/2-\beta$, and that the Riemann Hypothesis is the statement $\alpha={\rm Re}(\alpha)$. It is known that $0<\beta<1$ (critical band), and therefore $-1/2<{\rm Im}(\alpha)<1/2$. If we let $\mu=\frac12-\beta$ then $\alpha=\gamma+i \mu$, and $-1/2 < \mu < 1/2$. This notation simplifies the appearance of our formulas. As usual in Number Theory, $\log$ denotes the neperian logarithm. 

\section{Introduction}
In $1911$ E. Landau proved that for any fixed $t>1$
\begin{equation}\label{landau1}
\sum_{0<\gamma\leq T} t^{\rho} = \frac{-T}{2\pi} \Lambda(t) + \mathcal{O}(\log T),
\end{equation}
where $\rho$ runs over the non-trivial zeros of the Riemann zeta function $\zeta(s)$ and $\Lambda(t)$ is the Mangoldt function which is equal to $\log p$ if $x$ is a power of a prime number $p$ and $0$ otherwise. Since the use of (\ref{landau1}) is limited by its lack of uniformity in $t$, Gonek was interested in a version of it uniform in both variables and in  \cite{Gonek1,Gonek2}, he gives the remarkable formula
\[
\sum_{0<\gamma\leq T} t^{\rho} = \frac{-T}{2\pi} \Lambda(t) + E(t,T),
\]
where the error term $E(t,T)$ has the estimation
\[
E(t,T)=\mathcal{O} \left(t\log 2tT \log \log 3t \right)+\mathcal{O} \left(\log t \, {\rm min} ( T ; \frac{t}{\langle t \rangle} ) \right)  + \mathcal{O} \left(\log 2T \, {\rm min}  (T ; \frac{1}{\log t} ) \right),
\]
with $\langle t \rangle$ denoting the distance between $t$ and the nearest prime power other than $t$. Gonek's formula is also commented in \cite{Kalape}. The aim of this paper is to approximate $\Lambda(t)$ in a good way. Of course we can do it with the Landau-Gonek's formula:
\begin{equation}\label{Landau-Gonek}
\Lambda(t)=\frac{-2\pi}{T} \sqrt{t} \sum_{0< \gamma \leq T} \cos(\alpha \log t)
+\frac{E(t,T)}{T},
\end{equation}
where we have used the Riemann's notation $\alpha=-i (\rho-1/2)$. Observe that the formula (\ref{landau1}) or either (\ref{Landau-Gonek}) imply
\begin{equation}\label{Landau-limit}
\Lambda(t)=-2\pi \sqrt{t} \lim_{T \to +\infty} \frac{1}{T}  \sum_{0< \gamma \leq T} \cos(\alpha \log t).
\end{equation}
which has the surprising property that neglecting a finite number of zeros of zeta we still recover the Mangoldt's function. Also surprising are the self-replicating property of the zeros of zeta observed recently in the statistics of \cite{perez-marco}, and later proved in \cite{ford-zaha}; and the property of the zeros discovered by Y. Matiyasevich \cite{Matiya}. In this paper we will prove the new formula:
\[
\Lambda(t) = -4 \pi \sqrt{t} \cot \frac{x}{2} \sum_{\gamma >0} \frac{\sinh x \alpha}{\sinh \pi \alpha} \cos(\alpha \log t)+2\pi \cot \frac{x}{2} \left( t-\frac{1}{t^2-1} \right)+\varepsilon(t,x).
\]
and find bounds for the error term $\varepsilon(t,x)$. In addition, letting $\cot(x/2)=(\log T)/T$ we will prove that for integers $t>2$, the following truncated version of it holds
\begin{align*}
\Lambda(t) &= -4 \pi \sqrt{t} \left( \sum_{0< \gamma < T} \frac{\sinh x \alpha}{\sinh \pi \alpha} \cos(\alpha \log t) \right) \frac{\log T}{T} + 2\pi \left( t-\frac{1}{t^2-1} \right) \frac{\log T}{T} \\ &+ \mathcal{O} \left( t^2 (\log t) \frac{\log^2 T}{T^2} \right),
\end{align*}
and we also will get the estimation of the error for non-integers $t$. Finally, observing that
\[
\Lambda(t)=-4 \pi \sqrt{t} \lim_{x \to \pi^{-}} \left( \cot \frac{x}{2} \sum_{\gamma>0} \frac{\sinh x \alpha}{\sinh \pi \alpha} \cos(\alpha \log t) \right),
\]
we see that it shares with (\ref{Landau-limit}) the property of invariance when we neglect a finite number of zeros. In the last section we give the new function
\[
\Phi_2(t)=-\sum_{m=1}^{T} T^{-m/T} \frac{\Lambda(m)}{\sqrt{m}} \cos(t \log m) + C \, \sqrt{t},
\]
where $C \approx 0.12$ is a constant. This function has cusps at the non-trivial zeros of zeta. It looks like that this function is interesting and I will continue investigating it.

\section{Series involving the Mangoldt function}

The formulas that we will prove in this section involve the Mangoldt's function and a sum over the non-trivial zeros of the Riemann-zeta function.
\begin{theorem}\label{main-thm}
Let $\Omega = \mathbb{C}-(-\infty,0]$ (the plane with a cut along the real negative axis). We shall denote by $\log z$ the main branch of the $\log$ function defined on $\Omega$ taking $|\arg(z)|<\pi$. We also denote by $z^s=\exp(s\log(z))$, the usual branch of $z^s$ defined also on $\Omega$. For all $z \in \Omega$ we have
\begin{equation}\label{for-main-thm}
\sum_{n=1}^{\infty} \frac{\Lambda(n)z}{\pi \sqrt{n} (z+n)}-\sum_{n=1}^{\infty} \frac{\Lambda(n)}{\pi \sqrt{n} (1+nz)}=\sqrt{z}-\frac{\zeta'(\frac12)}{\pi \zeta (\frac12)}-2\sum_{\gamma>0} \frac{\sin (\alpha \log z) }{\sinh \pi \alpha} + h(z),
\end{equation}
where
\begin{equation}\label{h-of-z}
h(z)=\frac{1}{\sqrt{z}(z^2-1)}-\frac{1}{2z-2}+\frac{\log(8\pi)+C}{\pi} \frac{1}{z+1}-\frac{2}{\pi} \frac{\sqrt{z}}{z+1}\arctan \frac{1}{\sqrt{z}}.
\end{equation}
\end{theorem}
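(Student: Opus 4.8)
Denote by $L=L(z)$ the left-hand side of \eqref{for-main-thm}. The plan is to represent $L$ as a Mellin--Barnes integral of $\zeta'/\zeta$, push the line of integration across the pole at $s=1$ and the non-trivial zeros, and then use the functional equation of $\zeta$ to make the identity refer back to itself. The elementary computation $\int_0^\infty\frac{z}{\pi\sqrt{x}\,(z+x)}\,x^{s-1}\,dx=-\frac{z^{\,s-1/2}}{\cos\pi s}$ holds, and defines an analytic function of $z\in\Omega$, for $\tfrac12<\operatorname{Re}(s)<\tfrac32$; with the companion identity producing $-z^{\,1/2-s}/\cos\pi s$ for the second kernel, Mellin inversion and a Fubini interchange (legitimate because $\sum_n\Lambda(n)n^{-3/2}<\infty$ and $1/\cos\pi s$ decays like $e^{-\pi|t|}$) give, for $1<c<\tfrac32$,
\[
L=\frac{1}{2\pi i}\int_{(c)}\frac{z^{\,s-1/2}-z^{\,1/2-s}}{\cos\pi s}\,\frac{\zeta'(s)}{\zeta(s)}\,ds .
\]

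I would then move the line of integration to $\operatorname{Re}(s)=1-c\in(-\tfrac12,0)$, taking the horizontal sides at ordinates $\pm T_j\to\infty$ with $|\zeta'/\zeta|\ll\log^2 T_j$ (a standard choice), so that the $e^{-\pi|t|}$ decay of $1/\cos\pi s$ kills them in the limit. The poles crossed are $s=1$, with residue $\sqrt z-1/\sqrt z$ (since $\operatorname{Res}_{s=1}\zeta'/\zeta=-1$, $\cos\pi=-1$), and the non-trivial zeros $\rho=\tfrac12+i\alpha$, each contributing $\dfrac{z^{i\alpha}-z^{-i\alpha}}{\cos(\tfrac\pi2+i\pi\alpha)}=-\dfrac{2\sin(\alpha\log z)}{\sinh\pi\alpha}$; since this is even in $\alpha$ while $\rho\mapsto1-\rho$ permutes the zeros and sends $\alpha\mapsto-\alpha$, the sum over all zeros is twice the sum over $\gamma>0$. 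In the remaining integral over $\operatorname{Re}(s)=1-c$ I substitute the functional equation $\dfrac{\zeta'(s)}{\zeta(s)}=\log 2\pi+\dfrac{\pi}{2}\cot\dfrac{\pi s}{2}-\psi(1-s)-\dfrac{\zeta'(1-s)}{\zeta(1-s)}$ (with $\psi=\Gamma'/\Gamma$): the term carrying $\zeta'/\zeta$ at $1-s$, under $s\mapsto1-s$ (which turns $\cos\pi s$ into $-\cos\pi s$ and interchanges $z^{\,s-1/2}$ with $z^{\,1/2-s}$), reproduces $L$ up to sign, so collecting the two copies of $L$ yields the self-referential identity
\[
2L=\Bigl(\sqrt z-\tfrac1{\sqrt z}\Bigr)-4\sum_{\gamma>0}\frac{\sin(\alpha\log z)}{\sinh\pi\alpha}+J(z),\qquad
J(z):=\frac{1}{2\pi i}\int_{(1-c)}\frac{z^{\,s-1/2}-z^{\,1/2-s}}{\cos\pi s}\Bigl(\log 2\pi+\tfrac{\pi}{2}\cot\tfrac{\pi s}{2}-\psi(1-s)\Bigr)ds ,
\]
reducing the theorem to the $\zeta$-free identity $J(z)=\sqrt z+\tfrac1{\sqrt z}-\tfrac{2\zeta'(1/2)}{\pi\zeta(1/2)}+2h(z)$.

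To evaluate $J(z)$, I would split it according to the two monomials $z^{\,s-1/2}$, $z^{\,1/2-s}$ and move each of the two contours into the half-plane where its integrand decays (to the left, resp.\ right, for $|z|>1$, and oppositely for $|z|<1$; since $J$ is analytic on $\Omega$ the two cases agree). The integrand of $J$ is regular at the positive even integers (the poles of $\cot\tfrac{\pi s}{2}$ and of $\psi(1-s)$ cancelling there), so the poles picked up are: $s=0$ (from $\cot$), giving $\sqrt z$; $s=\tfrac12$, a pole of each of the two pieces but not of their difference, giving the constant $-\tfrac{2\zeta'(1/2)}{\pi\zeta(1/2)}$ after using $\log 2\pi+\tfrac{\pi}{2}-\psi(\tfrac12)=2\,\zeta'(\tfrac12)/\zeta(\tfrac12)$; the remaining odd half-integers (from $1/\cos\pi s$), whose residues — with $\cot\tfrac{\pi s}{2}$ taking values $\pm1$ and $\psi$ being evaluated at half-integers — assemble, as geometric series and via $\arctan x=\sum_{k\ge0}(-1)^k x^{2k+1}/(2k+1)$, into $-\tfrac1{z-1}+\tfrac{2(\log 8\pi+C)}{\pi(z+1)}-\tfrac{4}{\pi}\tfrac{\sqrt z}{z+1}\arctan\tfrac1{\sqrt z}$ (with $C$ Euler's constant, coming from $\psi(\tfrac12)=-C-2\log 2$); and the negative even integers together with the odd positive integers, whose residues sum to $\tfrac2{\sqrt z(z^2-1)}+\tfrac1{\sqrt z}$. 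Adding these gives $J(z)=\sqrt z+\tfrac1{\sqrt z}-\tfrac{2\zeta'(1/2)}{\pi\zeta(1/2)}+2h(z)$, and dividing $2L=\cdots$ by $2$ proves \eqref{for-main-thm}.

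The main obstacle is this last evaluation of $J(z)$: the contour shift and the functional-equation step are routine once the standard analytic estimates are in place, but what remains is the careful residue bookkeeping — matching, with correct signs, the infinitely many residues of $J$ against the four constituents of $h(z)$ in \eqref{h-of-z}, the term $\sqrt z+1/\sqrt z$, and the constant $-2\zeta'(1/2)/(\pi\zeta(1/2))$, and using the special-value identity for $\zeta'(1/2)/\zeta(1/2)$ and the value of $\psi$ at half-integers at exactly the right places.
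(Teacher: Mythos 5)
Your proposal is correct; I checked the key residues, and your stated totals for $J(z)$ do assemble to $\sqrt z+\tfrac{1}{\sqrt z}-\tfrac{2\zeta'(1/2)}{\pi\zeta(1/2)}+2h(z)$ (for instance, the half-odd integer poles on the two sides of the line contribute equal amounts, doubling to the $2h(z)$ terms, and $\tfrac{1}{\sqrt z(z^2-1)}+\tfrac{z^{3/2}}{z^2-1}=\tfrac{2}{\sqrt z(z^2-1)}+\tfrac{1}{\sqrt z}$ as you claim). The route is genuinely different in organization from the paper's, even though the underlying residue calculus is the same. The paper integrates $\frac{\zeta'(s+\frac12)}{\zeta(s+\frac12)}\frac{\pi}{\sin\pi s}z^s$ along $\operatorname{Re}(s)=-\frac12$ and evaluates that one integral twice, closing to the right for $|z|<1$ and to the left for $|z|>1$, then equates the two analytic continuations; the functional equation is applied term-by-term to the series $\sum(-1)^n\frac{\zeta'(\frac12-n)}{\zeta(\frac12-n)}z^{-n}$ so that the two Dirichlet tails recombine into the left-hand side of \eqref{for-main-thm}, with the vanishing of the horizontal integrals deferred to a later section. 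You instead antisymmetrize the kernel into $(z^{s-1/2}-z^{1/2-s})/\cos\pi s$ --- the paper's kernel after $s\mapsto s+\frac12$, minus its reflection --- start on $\operatorname{Re}(s)=c>1$ where the integral literally equals $L$, shift once to $\operatorname{Re}(s)=1-c$, and let the substitution $s\mapsto 1-s$ together with the functional equation regenerate $-L$, yielding the self-referential identity $2L=\cdots+J$. This buys you a cleaner setup: the $\Lambda$-sums appear directly from the Mellin transform rather than by resumming $\sum(-1)^n\frac{\zeta'(n+\frac12)}{\zeta(n+\frac12)}z^{\pm n}$, the removability of $s=\frac12$ in the main shift and the overall factor $2$ fall out of the antisymmetry, and everything $\zeta$-free is quarantined in the elementary integral $J$. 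The cost is essentially the paper's cost: the residues of $J$ at the half-odd integers, at $s=0$, at the negative even integers and at the positive odd integers are exactly the residues the paper computes, requiring $\psi(k+\frac12)=2h_k-C-2\log2$, the $\arctan$ evaluation of $\sum_k(-1)^kh_kz^{-k}$, and $2\zeta'(\frac12)/\zeta(\frac12)=\log2\pi+\frac{\pi}{2}-\psi(\frac12)$, plus the same control $|\zeta'/\zeta|\ll\log^2T_j$ on well-chosen horizontal ordinates. One point to make explicit when writing it up: after splitting $J$ into its two monomial pieces and pushing them in opposite directions, which piece crosses $s=\frac12$ (and hence the individual residue lists) depends on whether $|z|>1$ or $|z|<1$; the totals agree because $J$ is analytic on $\Omega$, so you should fix one case, say $|z|>1$, and conclude by analytic continuation.
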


\begin{proof}
We consider the function
\[
f(s)=\frac{\zeta'(s+\frac12)}{\zeta(s+\frac12)} \frac{\pi}{\sin \pi s} z^s,
\]
and let $I_0$, $I_{r}$, $I_{\ell}$,  where $I_{r}=I_1+I_2+I_3$ and $I_{\ell}=I_4+I_5+I_6$, be the analytic continuation of the integral
\[
I=\frac{1}{2\pi i} \int f(s) ds,
\]
along the indicated sides of the contour of the figure. It is a known result that all the zeros of $\zeta(s+1/2)$ are in the band among the lines red and green. 
\begin{center}
\begin{tikzpicture}
\draw[thick,color=blue] (1,0) rectangle (11,5);
\filldraw[color=black,fill=black] (7.25,2.5) circle (0.05);
\filldraw[color=black,fill=black] (1,0) circle (0.05);
\filldraw[color=black,fill=black] (1,5) circle (0.05);
\filldraw[color=black,fill=black] (11,0) circle (0.05);
\filldraw[color=black,fill=black] (11,5) circle (0.05);
\filldraw[color=black,fill=black] (6.5,0) circle (0.05);
\filldraw[color=black,fill=black] (6.5,5) circle (0.05);
\draw[red, thick] (6.5,0) -- (6.5,5);
\draw[->,red, very thick] (6.5,2.5) -- (6.5,2.5);
\draw[->,red, very thick] (3.74,0) -- (3.75,0);
\draw[->,red, very thick] (3.75,5) -- (3.74,5);
\draw[->,red, very thick] (1,2.6) -- (1,2.5);
\draw[->,red, very thick] (8.75,5) -- (8.76,5);
\draw[->,red, very thick] (8.75,0) -- (8.74,0);
\draw[->,red, very thick] (11,2.6) -- (11,2.5);
\draw[green,thick] (8,0) -- (8,5);
\node (n1) at (1,-0.5) {$-\infty-iT$};
\node (n2) at (1,5.5) {$-\infty +iT$};
\node (n3) at (11,-0.5) {$+\infty-iT$};
\node (n4) at (11,5.5) {$+\infty +iT$};
\node (n5) at (6.5,-0.5) {$-\frac12-iT$};
\node (n6) at (6.5,5.5) {$-\frac12+iT$};
\node (n7) at (9,2.5) {$|z|<1$};
\node (n8) at (4,2.5) {$|z|>1$};
\node (ne1) at (6.35, 1.5) {{\tiny Integrals extended to all $z$ by analytic continuation}};
\node (n9) at (8.75,-0.5) {$I_3$};
\node (n10) at (8.75,5.5) {$I_1$};
\node (n11) at (3.75,-0.5) {$I_6$};
\node (n12) at (3.75,5.5) {$I_4$};
\node (n13) at (11.4,2.5) {$I_2$};
\node (n14) at (6.1,2.5) {$I_0$};
\node (n15) at (0.65,2.5) {$I_5$};
\end{tikzpicture}
\end{center}
We will follow this scheme of proof: The integral along the line $\sigma=-1/2$ is calculated for $|z|>1$ integrating to the left and for $|z|<1$ integrating to the right. Both expressions are different but valid for $z \in \Omega$ by analytic continuation. Finally, equating both expressions we will arrive at (\ref{for-main-thm}). 
\par Indeed, if $|z|<1$, integrating to the right hand side, we get by applying the residues theorem that
\begin{align}
& I_0+ I_{r}=-{\rm res}_{s=\frac12} \left( \frac{\zeta'(s+\frac12)}{\zeta(s+\frac12)} \frac{\pi}{\sin \pi s} z^s \right)-\sum_{n=0}^{\infty} {\rm res}_{s=n} \left( \frac{\zeta'(s+\frac12)}{\zeta(s+\frac12)} \frac{\pi}{\sin \pi s} z^s \right) -  \nonumber  \\
& \sum_{|\gamma|<T} {\rm res}_{s=\rho-\frac12} \left( \frac{\zeta'(s+\frac12)}{\zeta(s+\frac12)} \frac{\pi}{\sin \pi s} z^s \right) \!=\! \pi \sqrt{z}-\sum_{n=0}^{\infty} (-1)^n \frac{\zeta'(n+\frac12)}{\zeta(n+\frac12)}z^n-\pi\sum_{|\gamma| < T} \frac{z^{\rho-\frac12}}{\sin \pi (\rho-\frac12)}. \nonumber
\end{align}
Hence, by analytic continuation, we have that for all $z \in \Omega$
\begin{equation}\label{int-right}
I_0+I_{r} = \pi \sqrt{z}-\frac{\zeta'(\frac12)}{\zeta(\frac12)}-\sum_{n=1}^{\infty} \frac{\Lambda(n) z}{\sqrt{n}(z+n)} - \pi \sum_{|\gamma| < T} \frac{z^{\rho-\frac12}}{\sin \pi (\rho-\frac12)}
\end{equation}
If $|z|>1$, then following the way to the left hand side, we deduce that
\begin{align}
I_0+I_{\ell} &= \sum_{n=1}^{\infty} {\rm res}_{s=-2n-\frac12} \left( \frac{\zeta'(s+\frac12)}{\zeta(s+\frac12)} \frac{\pi}{\sin \pi s} z^s \right) + \sum_{n=1}^{\infty} {\rm res}_{s=-n} \left( \frac{\zeta'(s+\frac12)}{\zeta(s+\frac12)} \frac{\pi}{\sin \pi s} z^s \right) \nonumber \\
&= \sum_{n=1}^{\infty} \frac{\zeta'(-2n)}{\zeta(-2n)} \sin(2 \pi n) z^{-2n-\frac12}+\sum_{n=1}^{\infty} (-1)^n \frac{\zeta'(\frac12-n)}{\zeta(\frac12-n)}z^{-n}, \label{lastsums}
\end{align}
where we understand the expression inside the first sum of (\ref{lastsums}) as a limit based on the identity
\[ 
\lim_{s \to -2n} (s+2n) \frac{\zeta'(s)}{\zeta(s)} = \lim_{s \to -2n} \frac{\zeta'(s)}{\zeta(s)}  \frac{\pi (s+2n)}{\sin \pi s} \frac{\sin \pi s}{\pi} = \lim_{s \to -2n} \frac{\sin{\pi s}}{\pi} \frac{\zeta'(s)}{\zeta(s)}.
\] 
We use the functional equation (which comes easily from the functional equation of $\zeta(s)$):
\begin{equation}\label{zpz}
\frac{\zeta'(1-s)}{\zeta(1-s)}=\log 2\pi-\psi(s)+\frac{\pi}{2}\tan \frac{\pi s}{2}-\frac{\zeta'(s)}{\zeta(s)}.
\end{equation}
to simplify the sums in (\ref{lastsums}). For the first sum in (\ref{lastsums}), we obtain 
\[
\sum_{n=1}^{\infty} \frac{\zeta'(-2n)}{\zeta(-2n)} \sin(2\pi n) z^{-2n-\frac12}=\pi \sum_{n=1}^{\infty} z^{-2n-\frac12},
\]
and for the last sum in (\ref{lastsums}), we have
\[
\log 2 \pi \sum_{n=1}^{\infty} (-1)^n z^{-n}-\sum_{n=1}^{\infty} (-1)^n \psi \left(\frac12+n \right)z^{-n}+\frac{\pi}{2}\sum_{n=1}^{\infty} z^{-n}-\sum_{n=1}^{\infty} (-1)^n \frac{\zeta'(n+\frac12)}{\zeta(n+\frac12)}z^{-n},
\]
where $\psi$ is the digamma function, which satisfies the property
\[
\psi \left(\frac12+n \right) = 2 h_n - C - 2 \log 2, \qquad h_n = \sum_{j=1}^n \frac{1}{2j-1}.
\]
Using the identity, due to Hongwei Chen \cite[p.299, exercise 34]{bailey-et-al}
\[
2 \sum_{n=1}^{\infty} (-1)^n h_n z^{-n} = i \frac{\sqrt{z}}{z+1} \log \frac{\sqrt{z}+i}{\sqrt{z}-i}=-2 \frac{\sqrt{z}}{z+1} \arctan \frac{1}{\sqrt{z}},
\]
we get that for $|z|>1$
\[
I_0+I_{\ell} = - \frac{\pi}{\sqrt{z}(z^2-1)}-\frac{\log 2 \pi}{z+1}-\frac{C+\log 4}{z+1}+\frac{\pi}{2z-2} +2 \frac{\sqrt{z}}{z+1} \arctan \frac{1}{\sqrt{z}}-\sum_{n=1}^{\infty} (-1)^n \frac{\zeta'(n+\frac12)}{\zeta(n+\frac12)}z^{-n}
\]
Then, by analytic continuation, we obtain that for all $z \in \Omega$:
\begin{equation}\label{int-left}
I_0+I_{\ell} =-\frac{\pi}{\sqrt{z}(z^2-1)}-\frac{C+\log 8\pi}{z+1}+\frac{\pi}{2z-2} + 2 \frac{\sqrt{z}}{z+1} \arctan \frac{1}{\sqrt{z}}-\sum_{n=1}^{\infty} \frac{\Lambda(n)}{\sqrt{n}(1+zn)}.
\end{equation}
It is easy to deduce that $I_2=I_5=0$, and we will prove that $I_{r}$ and $I_{\ell}$ tend to $0$ as $T \to \infty$ in the Section \ref{sec-bounds} of this paper. Hence, by identifying (\ref{int-right}) and (\ref{int-left}), and observing that the pole at $z=1$ is removable, we complete the proof.
\end{proof}

\begin{theorem}
The following identity
\begin{equation}\label{main2-thm}
\sum_{\gamma>0} \frac{\sinh z \alpha}{\sinh \pi \alpha} - \sum_{n=1}^{\infty} \frac{\Lambda(n)}{2\pi \sqrt{n}} \left(\frac{i e^{iz}}{e^{iz}+n}-\frac{i e^{-iz}}{e^{-iz}+n} \right) = f(z),
\end{equation}
where
\[
f(z)=\sin \frac{z}{2} - \frac{1}{8}\tan \frac{z}{4} -\frac{C+\log 8\pi}{4\pi}\tan \frac{z}{2} - \frac{1}{4\pi \cos \frac{z}{2}} \log \frac{1-\tan \frac{z}{4}}{1+\tan \frac{z}{4}},
\]
holds for $|{\rm Re}(z)|<\pi$. 
\end{theorem}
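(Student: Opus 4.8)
The plan is to deduce \eqref{main2-thm} directly from Theorem \ref{main-thm} by the substitution $w\mapsto e^{iz}$ in \eqref{for-main-thm}; after that the whole task reduces to an explicit trigonometric identity. The key preliminary observation is that $z\mapsto e^{iz}$ is a biholomorphism of the strip $S=\{z:|{\rm Re}(z)|<\pi\}$ onto $\Omega=\mathbb{C}-(-\infty,0]$, and that on $S$ the principal branch satisfies $\log(e^{iz})=iz$, hence $(e^{iz})^{1/2}=e^{iz/2}$; this is precisely why the statement is restricted to $|{\rm Re}(z)|<\pi$. So I would put $w=e^{iz}$ in \eqref{for-main-thm}, obtaining a valid identity for every $z\in S$.

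Next I would match the two series. Under $w=e^{iz}$ the first sum of \eqref{for-main-thm} becomes $\sum_{n\ge1}\frac{\Lambda(n)e^{iz}}{\pi\sqrt n\,(e^{iz}+n)}$, and multiplying numerator and denominator of the second sum by $e^{-iz}$ turns it into $\sum_{n\ge1}\frac{\Lambda(n)e^{-iz}}{\pi\sqrt n\,(e^{-iz}+n)}$, so that $-\tfrac i2$ times the left side of \eqref{for-main-thm} equals exactly the Dirichlet-type term of \eqref{main2-thm}, overall sign included. On the right, $\sin(\alpha\log e^{iz})=\sin(i\alpha z)=i\sinh(\alpha z)$, so the term $-2\sum_{\gamma>0}\frac{\sin(\alpha\log z)}{\sinh\pi\alpha}$ becomes, after multiplication by $-\tfrac i2$, exactly $-\sum_{\gamma>0}\frac{\sinh z\alpha}{\sinh\pi\alpha}$, which cancels the $+\sum_{\gamma>0}\frac{\sinh z\alpha}{\sinh\pi\alpha}$ carried over from the left of \eqref{main2-thm}. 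For the constant I would set $s=1/2$ in the functional equation \eqref{zpz}, using $\tan(\pi/4)=1$ and $\psi(1/2)=-C-2\log2$, to get $\zeta'(1/2)/\zeta(1/2)=\tfrac12(\log8\pi+C)+\tfrac\pi4$, so $-\tfrac i2$ times the constant of \eqref{for-main-thm} contributes $\tfrac{i(\log8\pi+C)}{4\pi}+\tfrac i8$.

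After these reductions the theorem is equivalent to the explicit identity $f(z)=-\tfrac i2 e^{iz/2}+\tfrac{i(\log8\pi+C)}{4\pi}+\tfrac i8-\tfrac i2\,h(e^{iz})$ between meromorphic functions on $S$. To verify it I would expand $h(e^{iz})$ using $e^{iz}-1=2ie^{iz/2}\sin\tfrac z2$, $e^{iz}+1=2e^{iz/2}\cos\tfrac z2$, $e^{2iz}-1=2ie^{iz}\sin z$, and handle the arctangent via $\arctan\tfrac1{\sqrt w}=-\tfrac i2\log\tfrac{\sqrt w+i}{\sqrt w-i}$, already used in the proof of Theorem \ref{main-thm}: the half-angle factorisations of $e^{iz/2}\pm i$ turn $\arctan(e^{-iz/2})$ into $\tfrac\pi4+\tfrac i2\log\tfrac{1-\tan(z/4)}{1+\tan(z/4)}$. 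Then the $\tfrac1{w+1}$-term of $h$ combines with the constant of the previous step (using $e^{-iz/2}\sec\tfrac z2=1-i\tan\tfrac z2$) into $-\tfrac{C+\log8\pi}{4\pi}\tan\tfrac z2$, the logarithmic part of the arctangent yields $-\tfrac1{4\pi\cos(z/2)}\log\tfrac{1-\tan(z/4)}{1+\tan(z/4)}$, and the pieces $-\tfrac i2 e^{iz/2}$, $\tfrac i8$, together with the first two summands of $-\tfrac i2\,h(e^{iz})$, must collapse to $\sin\tfrac z2-\tfrac18\tan\tfrac z4$; the disappearance of all remaining imaginary parts serves as an internal check.

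The step I expect to be the main obstacle is precisely this last bookkeeping: keeping the branches of $\log$, the square root and $\arctan$ mutually consistent throughout $|{\rm Re}(z)|<\pi$ so that no spurious multiple of $\pi$ creeps in, and confirming the cancellation of the imaginary leftovers. Once the displayed identity of meromorphic functions is verified on one open subset of $S$ it holds throughout $S$ by analytic continuation, since both sides are holomorphic there: the zero-sum converges locally uniformly because ${\rm Re}(z\alpha)-\pi\gamma\to-\infty$ as $\gamma\to\infty$ when $|{\rm Re}(z)|<\pi$ (the number of zeros up to height $T$ being $O(T\log T)$), and the two Dirichlet-type series converge absolutely.
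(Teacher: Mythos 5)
Your proposal is correct and follows essentially the same route as the paper: substitute $e^{iz}$ into Theorem \ref{main-thm} for $|{\rm Re}(z)|<\pi$, multiply by $-\tfrac{i}{2}$, and reduce everything to the trigonometric identity $-\tfrac{i}{2}H(e^{iz})=f(z)$, with the constant handled via the functional equation (\ref{zpz}) at $s=\tfrac12$. The only cosmetic difference is that the paper first symmetrizes using $H(z)=-H(z^{-1})$ before substituting, which makes the reality of each term manifest, whereas you expand $h(e^{iz})$ directly and check that the imaginary leftovers cancel.
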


\begin{proof}
Let 
\[
H(z)=\sqrt{z}-\frac{\zeta'(\frac12)}{\pi \zeta(\frac12)}+h(z).
\]
That is
\[
H(z)=\sqrt{z}-\frac{\zeta'(\frac12)}{\pi \zeta(\frac12)}+\frac{1}{\sqrt{z}(z^2-1)}-\frac{1}{2z-2}+\frac{\log(8\pi)+C}{\pi} \frac{1}{z+1}+\frac{i}{\pi} \frac{\sqrt{z}}{z+1}\log \frac{\sqrt{z}+i}{\sqrt{z}-i}.
\]
From (\ref{for-main-thm}), we see that the function $H(z)$ has the property $H(z)=-H(z^{-1})$. Hence
\begin{align}
H(z)=\frac{H(z)-H(z^{-1})}{2} =& \frac12 \left( \sqrt{z}-\frac{1}{\sqrt{z}} \right)
+ \frac12 \frac{1}{z^2-1} \left( \frac{1}{\sqrt{z}}+z^2 \sqrt{z} \right) - \frac14 \frac{z+1}{z-1} \nonumber \\ &- \frac{\log 8 \pi +C}{2\pi} \frac{z-1}{z+1} + \frac{i}{\pi} \frac{\sqrt{z}}{z+1} 
\log \frac{i\sqrt{z}-1}{i\sqrt{z}+1}+\frac12 \frac{\sqrt{z}}{z+1}. \label{H-of-z}
\end{align}
When $|{\rm Re}(z)|<\pi$ we have $e^{iz} \in \Omega$ so that, we may put $e^{iz}$ instead of $z$ in Theorem \ref{main-thm}. If in addition we multiply by $-i/2$, we get
\[
\sum_{\gamma>0} \frac{\sinh z \alpha}{\sinh \pi \alpha} - \sum_{n=1}^{\infty} \frac{\Lambda(n)}{2 \pi \sqrt{n}} \left(\frac{i e^{iz}}{e^{iz}+n}-\frac{i e^{-iz}}{e^{-iz}+n} \right)=-\frac{i}{2} H(e^{iz}).
\]
From (\ref{H-of-z}), we have
\begin{align}
-\frac{i}{2}H(e^{iz}) =& -\frac{i}{4} \left( e^{iz/2}-e^{-iz/2} \right) 
-\frac{i}{4} \left( \frac{e^{-iz/2}}{e^{2iz}-1}-\frac{e^{iz/2}}{e^{-2iz}-1}  \right)
+\frac{i}{8} \frac{e^{iz}+1}{e^{iz}-1} \nonumber \\
&+\frac{i}{4} \frac{\log 8\pi + C}{\pi} \frac{e^{iz}-1}{e^{iz}+1}+
\frac{1}{2\pi} \frac{e^{iz/2}}{e^{iz}+1} \log \frac{e^{i\frac{z+\pi}{2}}-1}{e^{i \frac{z+\pi}{2}}+1}
-\frac{i}{4} \frac{e^{iz/2}}{e^{iz}+1},
\nonumber
\end{align}
which we can write as
\begin{align}
-\frac{i}{2}H(e^{i z}) =& -\frac{i}{4} \left( e^{i z/2}-e^{-i z/2} \right) 
-\frac{i}{4} \frac{e^{3i z/2}+e^{-3i z/2}}{e^{i z}-e^{-i z}} \nonumber \\
&+\frac{i}{8} \frac{e^{i z/2}+e^{-i z/2}}{e^{i z/2}-e^{-i z/2}} 
+\frac{i}{4} \frac{\log 8\pi + C}{\pi} \frac{e^{i z/2}-e^{-i z/2}}{e^{i z/2}+e^{-i z/2}}  \nonumber \\
&+\frac{1}{2\pi} \frac{1}{e^{i z/2}+e^{-iz/2}} \log \frac{e^{i\frac{z+\pi}{4}}-e^{-i\frac{z+\pi}{4}}}{e^{i \frac{z+\pi}{4}}+e^{-i\frac{z+\pi}{4}}}-\frac{i}{4} \frac{1}{e^{iz/2}+e^{-iz/2}},
\nonumber
\end{align}
which simplifies to
\begin{align}
-\frac{i}{2}H(e^{iz}) =& \frac12 \sin \frac{z}{2}-\frac14 \frac{\cos \left(z+\frac{z}{2}\right)}{\sin z}+\frac18 \cot \frac{z}{2}-\frac{\log 8\pi + C}{4\pi} \tan \frac{z}{2} \nonumber \\ &+ \frac{1}{4\pi} \frac{1}{\cos \frac{z}{2}} \log \left(i \tan \frac{z+\pi}{4} \right)-\frac{i}{8} \frac{1}{\cos \frac{z}{2}}. 
\nonumber
\end{align}
As
\[
\frac{1}{4\pi} \frac{1}{\cos \frac{z}{2}} \log \left(i \tan \frac{z+\pi}{4} \right)-\frac{i}{8} \frac{1}{\cos \frac{z}{2}} = \frac{1}{4\pi} \frac{1}{\cos \frac{z}{2}} \log \tan \frac{z+\pi}{4},
\]
and using elementary trigonometric formulas we arrive at (\ref{main2-thm}).
\end{proof}

\section{New formulas for the Mangoldt function}
In this section we relate the Mangoldt's function to a sum over all the non-trivial zeros of the Riemann-zeta function and find bounds of the error term.

\begin{theorem}\label{theor-mang}
If $x \in [0, \pi)$ and $t>1$, then
\begin{align*}
& - 4\pi \sqrt{t} \cot\frac{x}{2} \sum_{\gamma>0} \frac{\sinh x \alpha}{\sinh \pi \alpha} \cos(\alpha \log t) + 4\pi \sqrt{t} \, g(x,t) \cot \frac{x}{2}  \\ & = 4 t \sqrt{t} \sum_{n=1}^{\infty}  \frac{\Lambda(n) \sqrt{n} \, \cos^2 \frac{x}{2}}{(t-n)^2 + 4nt \cos^2 \frac{x}{2}} - 4 t \sqrt{t} \sum_{n=1}^{\infty} \frac{\Lambda(n) \sqrt{n} \, \cos^2 \frac{x}{2}}{(nt-1)^2 + 4nt \cos^2 \frac{x}{2}},
\end{align*}
where $g(x,t)$ is the function 
\begin{multline}\label{fun-g}
g(x,t)=\frac{(1+t)\sin\frac{x}{2}}{2\sqrt{t}}-\frac{\sqrt{t}\sin \frac{x}{2}}{8\sqrt{t}\cos \frac{x}{2}+4(1+t)}-\frac{t\sin x (C+\log 8\pi)}{2\pi(1+t^2+2t \cos x)} - \\ 
\frac{(1+t)\sqrt{t} \cos \frac{x}{2}}{4\pi(1+t^2+2t \cos x)} \, \log \frac{1+t-2\sqrt{t}\sin \frac{x}{2}}{1+t+2\sqrt{t}\sin \frac{x}{2}} - \frac{(t-1)\sqrt{t} \sin \frac{x}{2}}{2\pi(1+t^2+2t \cos x)} 
\arctan \frac{t-1}{2\sqrt{t}\cos\frac{x}{2}}.
\end{multline}
\end{theorem}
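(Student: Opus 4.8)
The plan is to derive Theorem \ref{theor-mang} from identity (\ref{main2-thm}) by a double specialization of the variable. Since $x\in[0,\pi)$, both complex numbers $z=x+i\log t$ and $z=x-i\log t$ have $|{\rm Re}(z)|=x<\pi$, so (\ref{main2-thm}) applies to each. Writing $\cos(\alpha\log t)=\cosh(i\alpha\log t)$ and using $\sinh(a+b)+\sinh(a-b)=2\sinh a\cosh b$ with $a=x\alpha$ and $b=i\alpha\log t$, the sum of the two instances of (\ref{main2-thm}) has precisely $2\sum_{\gamma>0}\frac{\sinh x\alpha}{\sinh\pi\alpha}\cos(\alpha\log t)$ in place of the zeros sum. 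So the first step is just to write (\ref{main2-thm}) at $z=x\pm i\log t$ and add. (One may instead start from (\ref{for-main-thm}) with $z=te^{\pm ix}$ and subtract, using $\sin(\alpha(\log t+ix))-\sin(\alpha(\log t-ix))=2i\sinh(\alpha x)\cos(\alpha\log t)$; this makes the appearance of $h$ in $g$ more transparent.)

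The second step simplifies the two Dirichlet series produced. With $e^{i(x+i\log t)}=e^{ix}/t$, $e^{i(x-i\log t)}=te^{ix}$, $e^{-i(x+i\log t)}=te^{-ix}$, $e^{-i(x-i\log t)}=e^{-ix}/t$, the four fractions $\frac{ie^{iz}}{e^{iz}+n}-\frac{ie^{-iz}}{e^{-iz}+n}$ coming from the two choices of $z$ regroup into two complex-conjugate pairs, collapsing to $2\,{\rm Re}\,\frac{ie^{ix}}{e^{ix}+nt}$ and $2\,{\rm Re}\,\frac{ite^{ix}}{te^{ix}+n}$. Evaluating these with $|e^{ix}+nt|^2=n^2t^2+2nt\cos x+1$, $|te^{ix}+n|^2=t^2+2nt\cos x+n^2$ and $\cos x=2\cos^2\frac x2-1$ turns them into real rational functions with denominators $(nt-1)^2+4nt\cos^2\frac x2$ and $(t-n)^2+4nt\cos^2\frac x2$ and numerators proportional to $nt\sin x$. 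Multiplying the whole identity by $-2\pi\sqrt t\cot\frac x2$ and using $\cot\frac x2\sin x=2\cos^2\frac x2$ and $n/\sqrt n=\sqrt n$, the coefficient of each Dirichlet series becomes $4t\sqrt t\cos^2\frac x2$ with numerator $\Lambda(n)\sqrt n$, which is the right-hand side of the theorem.

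It then remains to identify the leftover elementary term, $-2\pi\sqrt t\cot\frac x2\bigl(f(x+i\log t)+f(x-i\log t)\bigr)$, with $-4\pi\sqrt t\,g(x,t)\cot\frac x2$; that is, to verify $g(x,t)=\tfrac12\bigl(f(x+i\log t)+f(x-i\log t)\bigr)$, after which moving $4\pi\sqrt t\,g\cot\frac x2$ back to the left gives the stated formula. Since $f$ is analytic and real on the real axis, $f(x-i\log t)=\overline{f(x+i\log t)}$, so $g(x,t)={\rm Re}\,f(x+i\log t)$; equivalently, using $f(z)=-\tfrac i2H(e^{iz})$ from the proof of (\ref{main2-thm}), together with $H(1/w)=-H(w)$ and $H(\overline w)=\overline{H(w)}$ (Schwarz reflection), one gets $g(x,t)=\tfrac12\,{\rm Im}\,H(te^{ix})$, with $H$ in its symmetrized form (\ref{H-of-z}).

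The bulk of the work, which I expect to be the main obstacle, is the explicit verification that $\tfrac12\,{\rm Im}\,H(te^{ix})$ (equivalently ${\rm Re}\,f(x+i\log t)$) equals the function (\ref{fun-g}). One substitutes $z=te^{ix}$ into (\ref{H-of-z}) and takes imaginary parts, using $\sqrt{te^{ix}}=\sqrt t\,e^{ix/2}$, $|te^{ix}\pm1|^2=1+t^2\pm2t\cos x$, and half-angle identities; after the trigonometric bookkeeping the algebraic (non-logarithmic) terms of (\ref{H-of-z}) combine into the four algebraic terms of $g$, with the factor $1+t^2+2t\cos x=|1+te^{ix}|^2$ appearing in the denominators. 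The delicate piece is the logarithmic term $\tfrac i\pi\tfrac{\sqrt z}{z+1}\log\tfrac{i\sqrt z-1}{i\sqrt z+1}$: at $z=te^{ix}$ the argument of the logarithm is a complex number whose modulus, after simplification, equals $\tfrac{1+t-2\sqrt t\sin\frac x2}{1+t+2\sqrt t\sin\frac x2}$ and whose argument equals $\arctan\tfrac{t-1}{2\sqrt t\cos\frac x2}$, so this one term of $H$ supplies both the logarithm and the arctangent in $g$; keeping the correct branch of $\log$ (hence all signs, and the absence of a spurious additive constant) is the only genuine subtlety. Finally, the apparent pole at $t=1$ is removable just as in Theorem \ref{main-thm}, and the endpoint $x=0$ is handled by continuity, since there $\cot\frac x2$ multiplies quantities vanishing at $x=0$.
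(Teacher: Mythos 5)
Your proposal is the paper's own proof: the paper disposes of this theorem in one line (``replace $z$ with $x-i\log t$ and take real parts; $g(x,t)$ is the real part of $f(x-i\log t)$''), and adding the two instances of (\ref{main2-thm}) at $z=x\pm i\log t$ is exactly taking real parts, since $f$ and both sums are real on the real axis. Your organization of the remaining verification, namely $g(x,t)={\rm Re}\,f(x-i\log t)=\tfrac12\,{\rm Im}\,H(te^{ix})$ with $H$ in the symmetrized form (\ref{H-of-z}), is the right way to check (\ref{fun-g}), and your warning about the branch of the logarithm is well placed.

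One concrete point you must not gloss over, because your own intermediate formulas decide it: the two real parts you produce,
\[
2\,{\rm Re}\,\frac{ite^{ix}}{te^{ix}+n}=\frac{-2nt\sin x}{(t-n)^2+4nt\cos^2\frac{x}{2}},\qquad
2\,{\rm Re}\,\frac{ie^{ix}}{e^{ix}+nt}=\frac{-2nt\sin x}{(nt-1)^2+4nt\cos^2\frac{x}{2}},
\]
carry the \emph{same} sign. Consequently, after multiplying by $-2\pi\sqrt t\cot\frac x2$, the two Dirichlet series enter the right-hand side with the same positive coefficient $4t\sqrt t\cos^2\frac x2$: the identity your argument actually yields has a plus between the two series, not the minus printed in Theorem \ref{theor-mang}. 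Symmetry forces this as well: under $t\mapsto 1/t$ the left-hand side is multiplied by $1/t$ (since $\cos(\alpha\log t)$ is even in $\log t$ and $g(x,1/t)=g(x,t)$, as one checks term by term in (\ref{fun-g})), while the two series swap and each picks up a factor $1/t$, so only their sum, not their difference, can occur. The printed minus is thus a slip in the statement --- harmless for the later estimates, because the second series is $O(t^{-1/2}\cos^2\frac x2)$ and is absorbed by the error bounds of Theorem \ref{mang-teor} --- but a proof that claims to land exactly on the printed right-hand side, as yours does, has not tracked the one sign that actually matters. Apart from this, your outline is the intended argument.
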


\begin{proof}
Replace $z$ with $x-i\log t$ and take real parts. The function $g(x,t)$ is the real part of $f(x-i\log t)$.
\end{proof}
It is interesting to expand $g(x,t)$ in powers of $\pi-x$, and we get
\begin{align}\label{simply-g}
g(x,t) &=\frac12 \left(\frac{t+1}{t} - \frac{t}{t^2-1} \right) \sqrt{t} \nonumber \\ &+
\left( \frac{-t}{4(1+t)^2} + \frac{t(C+\log 8\pi)}{2\pi(t-1)^2} - \frac{t}{2\pi(t-1)^2}+
\frac{(1+t)\sqrt{t}}{4\pi(t-1)^2} \log \frac{\sqrt{t}-1}{\sqrt{t}+1}. \right)(\pi-x) \nonumber \\ & \qquad +\mathcal{O}(\pi-x)^2=\frac12 \left(\frac{t+1}{t} - \frac{t}{t^2-1} \right) \sqrt{t}+
\mathcal{O}\left( \frac{\pi-x}{t} \right),
\end{align}
which shown that $g(x,t)$ tends to a simple function as $x \to \pi^{-}$.

\begin{theorem}\label{mang-teor}
If $x \in [0, \pi)$, then
\begin{align}\label{mang-bound}
0 & < \left( -4\pi \sqrt{t} \cot\frac{x}{2} \sum_{\gamma>0} \frac{\sinh x \alpha}{\sinh \pi \alpha} \cos(\alpha \log t) + 4\pi \sqrt{t} \, g(x,t) \cot \frac{x}{2} \right) - \Lambda(t) \nonumber \\ & < F(t) + 4 \cos^2 \frac{x}{2} \left(3 t^2\log t + \frac{\pi^2}{2} t  + \frac14 \log t + 0.6 \right).
\end{align}
where $g(x,t)$ is the function (\ref{fun-g}), and 
\[
F(t) = E(t) \cdot 4 t \sqrt{t} \frac{\Lambda(\lfloor t \rfloor) \sqrt{\lfloor t \rfloor} \cos^2 \frac{x}{2} }{ (\{t\})^2 + 4 t \lfloor t \rfloor \cos^2 \frac{x}{2}} + 4 t \sqrt{t} \frac{\Lambda(\lfloor t \rfloor + 1) \sqrt{\lfloor t \rfloor + 1} \cos^2 \frac{x}{2} }{ (1-\{t\})^2 + 4 t (\lfloor t \rfloor + 1)  \cos^2 \frac{x}{2}},
\]
where $E(t)=0$ if $t$ is and integer and $1$ otherwise.
\end{theorem}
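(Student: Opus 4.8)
The plan is to read this off the exact identity of Theorem~\ref{theor-mang}. Write $c=\cos^2\frac x2\in(0,1]$ and, for $n\ge1$,
\[
a_n=\frac{4t\sqrt t\,\Lambda(n)\sqrt n\,c}{(t-n)^2+4ntc},\qquad
b_n=\frac{4t\sqrt t\,\Lambda(n)\sqrt n\,c}{(nt-1)^2+4ntc}.
\]
The expression between big parentheses in (\ref{mang-bound}) is precisely the left-hand side of the identity in Theorem~\ref{theor-mang}, hence equals $\sum_{n\ge1}(a_n-b_n)$; so, writing $Q=\sum_{n\ge1}(a_n-b_n)-\Lambda(t)$, we must prove $0<Q<F(t)+4c\bigl(3t^2\log t+\tfrac{\pi^2}{2}t+\tfrac14\log t+0.6\bigr)$. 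The first step is to peel off the main term. A one-line computation shows $a_{\lfloor t\rfloor}=\Lambda(t)$ when $t\in\mathbb Z$ (the numerator $4t\sqrt t\sqrt{\lfloor t\rfloor}\,c=4t^2c$ cancels the denominator $4t^2c$), whereas $\Lambda(t)=0$ when $t\notin\mathbb Z$; comparing with the definition of $F(t)$ one checks that $F(t)=a_{\lfloor t\rfloor}+a_{\lfloor t\rfloor+1}$ if $t\notin\mathbb Z$ and $F(t)=a_{\lfloor t\rfloor+1}$ if $t\in\mathbb Z$ — exactly what the switch $E(t)$ records. In every case this yields the uniform identity
\[
Q=F(t)+\sum_{\substack{n\ge1\\ n\notin\{\lfloor t\rfloor,\lfloor t\rfloor+1\}}}(a_n-b_n)\;-\;b_{\lfloor t\rfloor}-b_{\lfloor t\rfloor+1}.
\]

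For the lower bound the crucial point is the elementary factorisation
\[
(nt-1)^2-(t-n)^2=(n^2-1)(t^2-1)\ge0\qquad(n\ge1,\ t>1),
\]
which gives $a_n-b_n=\dfrac{4t\sqrt t\,\Lambda(n)\sqrt n\,c\,(n^2-1)(t^2-1)}{\bigl((t-n)^2+4ntc\bigr)\bigl((nt-1)^2+4ntc\bigr)}\ge0$ for every $n$, with strict inequality whenever $n\ge2$ is a prime power. Since the two consecutive integers $\lfloor t\rfloor,\lfloor t\rfloor+1$ cannot exhaust $\{2,3,5\}$, choose a prime power $q$ in $\{2,3,5\}\setminus\{\lfloor t\rfloor,\lfloor t\rfloor+1\}$ and discard the remaining non-negative terms together with $F(t)\ge0$ to get $Q\ge(a_q-b_q)-b_{\lfloor t\rfloor}-b_{\lfloor t\rfloor+1}$. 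Because $\lfloor t\rfloor$ and $\lfloor t\rfloor+1$ lie within $1$ of $t$ one has $b_{\lfloor t\rfloor}+b_{\lfloor t\rfloor+1}=O(c\,t^{-2}\log t)$, while $a_q-b_q\asymp c\,t^{-1/2}$ for fixed $q$; hence $a_q-b_q>b_{\lfloor t\rfloor}+b_{\lfloor t\rfloor+1}$ for all large $t$, the finitely many small $t$ being a direct inspection, and $Q>0$ follows. (When $t\notin\mathbb Z$ this is even quicker, since then $Q=\sum_n(a_n-b_n)\ge a_2-b_2>0$.)

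For the upper bound, throw away the non-negative terms $b_{\lfloor t\rfloor},b_{\lfloor t\rfloor+1}$ together with all of $\sum_n b_n$, and drop the summand $4ntc$ from each remaining denominator:
\[
Q<F(t)+\sum_{\substack{n\ge1\\ n\notin\{\lfloor t\rfloor,\lfloor t\rfloor+1\}}}a_n\;\le\;F(t)+4t\sqrt t\,c\!\!\sum_{\substack{n\ge1\\ n\notin\{\lfloor t\rfloor,\lfloor t\rfloor+1\}}}\!\!\frac{\Lambda(n)\sqrt n}{(t-n)^2}.
\]
It therefore suffices to bound $S:=\sum_{n\notin\{\lfloor t\rfloor,\lfloor t\rfloor+1\}}\Lambda(n)\sqrt n/(t-n)^2$ by $\bigl(3t^2\log t+\tfrac{\pi^2}{2}t+\tfrac14\log t+0.6\bigr)/t\sqrt t$. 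I would split at $n=2t$. For $1\le n\le 2t$ use $\Lambda(n)\le\log n$, and $\sqrt n\le\sqrt t$ for $n\le t$ while $\sqrt n\le\sqrt t+\tfrac{n-t}{2\sqrt t}$ and $\log n\le\log t+\tfrac{n-t}{t}$ for $t<n\le2t$ (this keeps the dominant coefficient $\sqrt t\log t$ on both sides of $t$ and avoids a spurious factor $\sqrt2$); combining the two weights,
\[
\sum_{\substack{n\notin\{\lfloor t\rfloor,\lfloor t\rfloor+1\}}}\frac1{(t-n)^2}=\sum_{k\ge1}\frac1{(k+\{t\})^2}+\sum_{k\ge1}\frac1{(k+1-\{t\})^2}\le 2\zeta(2)-1=\tfrac{\pi^2}{3}-1,
\]
so the piece $n\le2t$ contributes at most $\bigl(\tfrac{\pi^2}{3}-1\bigr)\sqrt t\log t+O(\log^2 t/\sqrt t)$ to $S$, i.e.\ at most $\approx2.29\,t^2\log t$ after multiplication by $t\sqrt t$, comfortably below $3t^2\log t$. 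For $n>2t$ one has $(t-n)^2\ge n^2/4$, whence that piece is $\le 4\sum_{n>2t}\Lambda(n)n^{-3/2}=O(\log t/\sqrt t)$ by comparison with $\sum_{n\ge1}\Lambda(n)n^{-3/2}=-\zeta'(3/2)/\zeta(3/2)$, contributing $O(t\log t)$ to $t\sqrt t\,S$. Adding the two pieces gives the desired bound.

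The only laborious part of the argument is this last estimate — keeping the explicit constants $3,\ \pi^2/2,\ 1/4,\ 0.6$ honest through the range-split. It is comfortable for all but the smallest $t$ (each leading coefficient above is strictly under the one claimed); for small $t$ the generous term $\tfrac{\pi^2}{2}t$ carries the slack, and a finite verification finishes. No ingredient beyond the two algebraic identities above and the elementary Chebyshev-type bound $\sum_{n\le y}\Lambda(n)\ll y$ is required.
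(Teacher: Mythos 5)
Your proposal is correct, and for the upper bound it follows the same skeleton as the paper: discard the entire subtracted sum, identify the contribution of $n=\lfloor t\rfloor$ and $n=\lfloor t\rfloor+1$ as $\Lambda(t)+F(t)$, drop the term $4nt\cos^2\frac{x}{2}$ from the remaining denominators, and bound $\sum \Lambda(n)\sqrt{n}/(t-n)^2$. You diverge in how that last sum is estimated: the paper peels off $n=\lfloor t\rfloor-1$ separately (cost $t^2\log t$) and compares the two remaining sums with the integrals (\ref{integral-1})--(\ref{integral-2}), evaluated in closed form by Maple and expanded asymptotically to give $2t^2\log t+\frac{\pi^2}{2}t+\frac14\log t+h(t)$ with $h(t)<0.6$; you keep the sums discrete, split at $n=2t$, use $\sum_{k\ge1}(k+\{t\})^{-2}+\sum_{k\ge1}(k+1-\{t\})^{-2}\le 2\zeta(2)-1$ with linearized weights, and handle $n>2t$ via $\sum\Lambda(n)n^{-3/2}$. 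This avoids the computer-algebra antiderivatives and the unverified monotonicity claim about $h(t)$, at the price of tracking the second-order terms $t\log^2 t$ and $t^{3/2}$ against the slack $\bigl(3-(\frac{\pi^2}{3}-1)\bigr)t^2\log t$, which does require the finite check you flag for small $t$. The more substantive difference is the lower bound: the paper's proof establishes only the upper inequality and never addresses $0<(\cdots)-\Lambda(t)$, whereas you derive it from the factorisation $(nt-1)^2-(t-n)^2=(n^2-1)(t^2-1)\ge0$, so that every term $a_n-b_n$ is nonnegative, and then dominate the discarded $b_{\lfloor t\rfloor}+b_{\lfloor t\rfloor+1}=O\bigl(t^{-2}\log t\,\cos^2\frac{x}{2}\bigr)$ by a single surviving prime-power term $a_q-b_q\asymp t^{-1/2}\cos^2\frac{x}{2}$. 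That argument is sound (and immediate for non-integer $t$) and fills a genuine gap in the paper's proof; the only loose end on your side is making the ``finitely many small $t$'' inspection explicit.
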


\begin{proof}
Let
\[
S=4 t \sqrt{t} \sum_{n=1}^{\infty}  \frac{\Lambda(n) \sqrt{n} \, \cos^2 \frac{x}{2}}{(t-n)^2 + 4nt \cos^2 \frac{x}{2}} - 4 t \sqrt{t} \sum_{n=1}^{\infty} \frac{\Lambda(n) \sqrt{n} \, \cos^2 \frac{x}{2}}{(nt-1)^2 + 4nt \cos^2 \frac{x}{2}}.
\]
First, we see that
\[
S < 4 t \sqrt{t} \sum_{n=1}^{\infty}  \frac{\Lambda(n) \sqrt{n} \, \cos^2 \frac{x}{2}}{(t-n)^2 + 4nt \cos^2 \frac{x}{2}}.
\]
The contribution of the values $n=\lfloor t \rfloor$ and $n=\lfloor t \rfloor +1$ to the above summation is equal to $\Lambda(t)+F(t)$, and the contribution of $n=\lfloor t \rfloor - 1$ is bounded by $4 t^2 \log t \, \cos^2 \frac{x}{2}$. Hence 
\[
S < \Lambda(t) + F(t) + 4 \left(t^2 \log t +
t \sqrt{t} \sum_{n=2}^{\lfloor t \rfloor -2} \frac{\Lambda(n) \sqrt{n}}{(t-n)^2} + t \sqrt{t} \sum_{n=\lfloor t \rfloor +2}^{\infty} \frac{\Lambda(n) \sqrt{n}}{(t-n)^2} \right) \cos^2 \frac{x}{2}.
\]
Then, as the Mangoldt function is bounded by the logarithm, we obtain
\[
S < \Lambda(t) + F(t) + 4 \left(t^2 \log t +
t \sqrt{t} \sum_{n=1}^{\lfloor t \rfloor -2} \frac{\log(n) \sqrt{n}}{(t-n)^2} + t \sqrt{t} \sum_{n=\lfloor t \rfloor +2}^{\infty} \frac{\log(n) \sqrt{n}}{(t-n)^2} \right) \cos^2 \frac{x}{2}.
\]
Then we can deduce that
\[
S < \Lambda(t) + F(t) + 4 \left(t^2 \log t + t \sqrt{t} \int_{1}^{t-1} \frac{\log(u)\sqrt{u}}{(t-u)^2} \, du + t \sqrt{t} \int_{t+1}^{\infty} \frac{\log(u)\sqrt{u} }{(t-u)^2}\, du \right) \cos^2 \frac{x}{2},
\]
by observing that the integrands are increasing and decreasing functions of $u$ respectively. With the help of Maple, we get 
\begin{align}
\sqrt{t} & \int_{1}^{t-1} \frac{\log(u)\sqrt{u}}{(t-u)^2} \, du  = \sqrt{t} \, \sqrt{t-1} \log(t-1) + \frac12 \log(t-1) \log \frac{\sqrt{t}-\sqrt{t-1}}{\sqrt{t}+\sqrt{t-1}}  \nonumber \\
& \, + \log \left( 1+\frac{1}{\sqrt{t}} \right)- \log \left( 1-\frac{1}{\sqrt{t}} \right) + 
\log \left( 1-\sqrt{1-\frac{1}{t}} \right) - \log \left( 1+\sqrt{1-\frac{1}{t}} \right) \label{integral-1} \\
& \, + {\rm dilog} \left( 1+\frac{1}{\sqrt{t}} \right) - {\rm dilog} \left( 1- \frac{1}{\sqrt{t}}\right)+
{\rm dilog} \left( 1-\sqrt{1-\frac{1}{t}} \right) - {\rm dilog} \left( 1 + \sqrt{1-\frac{1}{t}} \right), \nonumber
\end{align}
and
\begin{align}
\sqrt{t} & \int_{t+1}^{\infty} \frac{\log(u)\sqrt{u} }{(t-u)^2}\, du  =    \sqrt{t} \, \sqrt{t+1} \log(t+1) + \frac14 \log^2 t - \frac14 \log t \log(t+1) + \frac{\pi^2}{3} \nonumber \\
& \, + \frac12 \log(t+1) \log(\sqrt{t}+\sqrt{t+1}) - \frac12 (\log t) \log(\sqrt{t+1}-\sqrt{t}) + \log \frac{\sqrt{t+1}+\sqrt{t}}{\sqrt{t+1}-\sqrt{t}} \label{integral-2} \\
& \, + {\rm dilog} \left( 1+\sqrt{1+\frac{1}{t}} \right) + {\rm dilog} \left( \sqrt{1+\frac{1}{t}} \right), \nonumber
\end{align}
where $\rm dilog$ denotes the dilogarithm. Finally, by expanding asymptotically and bounding each of the terms of (\ref{integral-1}) and (\ref{integral-2}), we can derive that 
\[
t \sqrt{t} \int_{1}^{t-1} \frac{\log(u)\sqrt{u}}{(t-u)^2} \, du + t \sqrt{t} \int_{t+1}^{\infty} \frac{\log(u)\sqrt{u} }{(t-u)^2}\, du  =  2t^2\log t +\frac{\pi^2}{2}t +\frac14 \log t + h(t),
\]
where $h(t)$ is a positive decreasing function. Therefore $h(t)<h(2)<0.6$ for $t > 2$. 
\end{proof}

\begin{corollary}\label{mang-cotas}
If $x \in [0, \pi)$ and $t \geq 2$ is an integer, then
\begin{align}\label{mang-integer}
0 & < \left( -4\pi \sqrt{t} \cot\frac{x}{2} \sum_{\gamma>0} \frac{\sinh x \alpha}{\sinh \pi \alpha} \cos(\alpha \log t) + 4\pi \sqrt{t} \, g(x,t) \cot \frac{x}{2} \right) - \Lambda(t) \nonumber \\ & < 4 \cos^2 \frac{x}{2} \left(4 t^2\log t + \frac{\pi^2}{2} t  + \frac14 \log t + 0.6 \right).
\end{align}
where $g(x,t)$ is the function (\ref{fun-g}), 
\end{corollary}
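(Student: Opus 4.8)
The plan is to re-run the estimates behind Theorem~\ref{mang-teor}, now exploiting that $t$ is an integer in order to control the two terms $n=t-1$ and $n=t+1$ of the first series jointly rather than separately. Write $c=\cos^2\frac x2\in(0,1]$ and let $S$ denote the common value of the two sides of the identity in Theorem~\ref{theor-mang}, so that the quantity appearing in (\ref{mang-integer}) is $S-\Lambda(t)$. The left inequality $0<S-\Lambda(t)$ is exactly the one already established in Theorem~\ref{mang-teor}, so nothing new is needed there. For the right inequality I would first discard the nonnegative subtracted series, obtaining
\[
S-\Lambda(t)<4t\sqrt t\sum_{n=1}^{\infty}\frac{\Lambda(n)\sqrt n\,c}{(t-n)^2+4ntc}-\Lambda(t),
\]
and then observe that, since $t$ is an integer, the term $n=t$ of the series equals $4t\sqrt t\cdot\Lambda(t)\sqrt t\,c/(4t^2c)=\Lambda(t)$, which cancels the outstanding $-\Lambda(t)$; it remains to bound the sum over $n\neq t$. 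A direct appeal to Theorem~\ref{mang-teor} will not do, because the bound $F(t)\le 4\cos^2\frac x2\,t^2\log t$ it would require is false in general (for instance when $t+1$ is a prime power and $x$ is close to $\pi$); this is precisely why the two terms nearest $n=t$ have to be handled together.

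The one genuinely new step is the estimate of the pair $n=t-1$, $n=t+1$. For these indices $(t-n)^2=1$, so they contribute
\[
4t\sqrt t\,c\left(\frac{\Lambda(t-1)\sqrt{t-1}}{1+4t(t-1)c}+\frac{\Lambda(t+1)\sqrt{t+1}}{1+4t(t+1)c}\right)\le 4t\sqrt t\,c\left(\Lambda(t-1)\sqrt{t-1}+\Lambda(t+1)\sqrt{t+1}\right)
\]
after bounding each denominator from below by $1$. Now $\Lambda(m)\le\log m$ (with $\Lambda(1)=\log 1=0$, which covers $t=2$), and $\phi(u):=\sqrt u\,\log u$ is concave on $[1,\infty)$ since $\phi''(u)=-\log u/(4u^{3/2})\le 0$ there, so that $\Lambda(t-1)\sqrt{t-1}+\Lambda(t+1)\sqrt{t+1}\le\phi(t-1)+\phi(t+1)\le 2\phi(t)=2\sqrt t\,\log t$; hence the combined contribution of $n=t-1$ and $n=t+1$ is at most $8t^2(\log t)\,c$.

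For the remaining indices $n\le t-2$ and $n\ge t+2$ I would argue exactly as in the proof of Theorem~\ref{mang-teor}: bound $(t-n)^2+4ntc\ge (t-n)^2$, replace $\Lambda$ by $\log$, compare the two resulting sums with the integrals of the monotone integrands over $[1,t-1]$ and $[t+1,\infty)$, and invoke the evaluations (\ref{integral-1}), (\ref{integral-2}) together with their asymptotic bounding; this yields the estimate $4c\left(2t^2\log t+\frac{\pi^2}{2}t+\frac14\log t+h(t)\right)$ with $0<h(t)\le h(2)<0.6$. Adding the two contributions,
\[
S-\Lambda(t)<8t^2(\log t)\,c+4c\left(2t^2\log t+\frac{\pi^2}{2}t+\frac14\log t+0.6\right)=4c\left(4t^2\log t+\frac{\pi^2}{2}t+\frac14\log t+0.6\right),
\]
which is the right-hand side of (\ref{mang-integer}). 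The only real obstacle is spotting that for integral $t$ the two terms adjacent to $n=t$ should be paired and then controlled by the concavity of $\sqrt u\,\log u$; everything else is the bookkeeping already carried out for Theorem~\ref{mang-teor}.
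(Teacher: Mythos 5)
Your argument is correct, and it supplies something the paper does not: Corollary \ref{mang-cotas} is stated with no proof, the intended derivation evidently being that for integer $t$ one has $E(t)=0$ and the surviving term of $F(t)$ can be absorbed into one extra $4t^2(\log t)\cos^2\frac{x}{2}$, turning the $3t^2\log t$ of Theorem \ref{mang-teor} into $4t^2\log t$. You are right that this absorption is not legitimate as it stands: it would require
\[
t\sqrt{t}\,\frac{\Lambda(t+1)\sqrt{t+1}}{1+4t(t+1)\cos^2\frac{x}{2}}\;\le\; t^2\log t,
\]
and when $t+1$ is a prime power and $x\to\pi^-$ the left side tends to $t\sqrt{t(t+1)}\,\log(t+1)>t^2\log t$ (already at $t=2$: $2\sqrt{6}\log 3\approx 5.4$ against $4\log 2\approx 2.8$), so the corollary is genuinely not a formal consequence of the displayed bound in Theorem \ref{mang-teor}. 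Your repair --- cancel $\Lambda(t)$ against the $n=t$ term of the series, treat $n=t-1$ and $n=t+1$ as a pair, and use the concavity of $\phi(u)=\sqrt{u}\log u$ on $[1,\infty)$ to get $\phi(t-1)+\phi(t+1)\le 2\phi(t)$, hence a joint contribution of at most $8t^2(\log t)\cos^2\frac{x}{2}$ --- is exactly what is needed; combined with the integral estimates already established in the proof of Theorem \ref{mang-teor} (which contribute $4\cos^2\frac{x}{2}\bigl(2t^2\log t+\frac{\pi^2}{2}t+\frac14\log t+h(t)\bigr)$ with $0<h(t)<0.6$) it yields the stated constant $4t^2\log t$, with very little slack at $t=2$. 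The individual steps all check: the denominators are legitimately bounded below by $1$, the case $t=2$ is covered since $\Lambda(1)=0=\phi(1)$ and the lower sum and integral are empty, and the strict final inequality is inherited from $h(t)<0.6$. This argument deserves to be written out, since it both proves the corollary and records why the naive deduction from Theorem \ref{mang-teor} fails.
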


\begin{lemma}\label{lemma-T}
If $x$ and $T$ are related by
\[
\cot \frac{x}{2} = \frac{\log T}{T},
\]
then for $T \geq 2$, we have
\begin{equation}\label{cota}
\left| \sum_{\gamma \geq T} \frac{\sinh x \alpha}{\sinh \pi \alpha} \cos(\alpha \log t) \right| < 3 \sqrt{t} \frac{2+\log T}{T}.
\end{equation}
\end{lemma}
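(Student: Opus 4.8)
The plan is to bound the series termwise by an explicitly decaying quantity and then to sum the result using a crude density estimate for the zeros. Throughout write $\alpha=\gamma+i\mu$ with $\gamma>0$ and $-1/2<\mu<1/2$, and put $c=\pi-x$; note that the termwise bound below already shows absolute convergence, so all rearrangements are legitimate. \emph{Step 1 (pointwise bound).} From $\sinh(a+ib)=\sinh a\cos b+i\cosh a\sin b$ one obtains the identity $|\sinh(a+ib)|^2=\sinh^2 a+\sin^2 b$, and likewise $|\cos(a+ib)|^2=\cos^2 a+\sinh^2 b$. Applying these to $a+ib=x\alpha$, $\pi\alpha$ and $\alpha\log t$, together with $|x\mu|<\pi/2$ and $|\mu|<1/2$, gives $|\sinh x\alpha|\le\cosh x\gamma$, $|\sinh\pi\alpha|\ge\sinh\pi\gamma$ and $|\cos(\alpha\log t)|\le\cosh(\tfrac12\log t)\le\sqrt t$ (the last step for $t\ge1$). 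Since every nontrivial zero has $\gamma>14$ and the relation $\cot(x/2)=(\log T)/T\le 1/e<0.37$ forces $x>2.4$, the correction factors $e^{-2x\gamma}$ and $e^{-2\pi\gamma}$ are negligible, so $\cosh(x\gamma)/\sinh(\pi\gamma)\le 1.01\,e^{-c\gamma}$, and therefore
\[
\Bigl|\sum_{\gamma\ge T}\frac{\sinh x\alpha}{\sinh\pi\alpha}\cos(\alpha\log t)\Bigr|\le 1.01\,\sqrt t\,\sum_{\gamma\ge T}e^{-c\gamma}.
\]

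\emph{Step 2 (size of $c=\pi-x$).} Since $\cot(x/2)=\tan\bigl((\pi-x)/2\bigr)$, the hypothesis gives $c=2\arctan\bigl((\log T)/T\bigr)$. The essential point is to keep the cubic term in the Taylor expansion of $\arctan$: from $y-\tfrac13y^3\le\arctan y\le y$ with $y=(\log T)/T$ one gets $cT\ge 2\log T-\tfrac23(\log^3 T)/T^2$, hence $e^{-cT}\le 1.12\,T^{-2}$ for $T\ge2$; the same two inequalities also yield the crude lower bound $c\ge 1.9\,(\log T)/T$, used below to control $1/c$. This is the only delicate step, and the main obstacle: a weaker estimate of the form $c\ge c_0(\log T)/T$ with $c_0<2$ would give only $e^{-cT}=O(T^{-c_0})$, and the factor $T\log T$ coming from the density of zeros near height $T$ (see Step 3) would then destroy the bound for very large $T$; it is precisely the cubic remainder of $\arctan$ that restores the exponent $2$.

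\emph{Step 3 (summation over zeros and conclusion).} Writing $e^{-c\gamma}=c\int_\gamma^\infty e^{-cv}\,dv$ and interchanging sum and integral, $\sum_{\gamma\ge T}e^{-c\gamma}\le c\int_T^\infty N(v)e^{-cv}\,dv$, where $N$ denotes the zero-counting function. Using the crude form $N(v)\le v\log v$ ($v\ge2$) of the Riemann--von Mangoldt estimate and two integrations by parts,
\[
c\int_T^\infty v\log v\,e^{-cv}\,dv\le e^{-cT}\Bigl(T\log T+\frac{\log T+1}{c}+\frac{1}{c^2T}\Bigr).
\]
Substituting $e^{-cT}\le 1.12\,T^{-2}$ and $1/c\le T/(1.9\log T)$ gives $\sum_{\gamma\ge T}e^{-c\gamma}\le T^{-1}\bigl(1.12\log T+2.1\bigr)$ for $T\ge2$, and combining with Step 1 the left side of (\ref{cota}) is at most $\sqrt t\,T^{-1}\bigl(1.14\log T+2.2\bigr)<3\sqrt t\,(2+\log T)/T$, as claimed.
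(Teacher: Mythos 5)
Your proof is correct, and while Steps 1 and 2 run parallel to the paper's argument (termwise bound $|\cos(\alpha\log t)|\le\sqrt t$ via $|\mu|<1/2$, reduction of the ratio of hyperbolic sines to $e^{-(\pi-x)\gamma}$, and the expansion $\pi-x=2\log T/T-\tfrac23\log^3T/T^3+\cdots$), your Step 3 is genuinely different. The paper partitions $[T,\infty)$ into unit intervals, invokes Trudgian's local bound of at most $3\log T$ zeros per interval $[T,T+1]$, and compares the resulting sum with $3\int_{T-1}^{\infty}u^{-2}\log u\,du$; you instead write $e^{-c\gamma}=c\int_{\gamma}^{\infty}e^{-cv}\,dv$, sum against the global counting function with the crude bound $N(v)\le v\log v$, and integrate by parts twice. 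Your route is more elementary (it needs only the Riemann--von Mangoldt upper bound rather than a local density estimate) and, more importantly, it is more careful on the one delicate point: since $\pi-x<2\log T/T$, the paper's replacement of $e^{-(\pi-x)\gamma}$ by $\exp(-2\gamma\log T/T)$ reverses the inequality, whereas you keep the cubic remainder of $\arctan$ and absorb it into the explicit constant $e^{-cT}\le1.12\,T^{-2}$, together with the lower bound $c\ge1.9\log T/T$ to control the $1/c$ and $1/c^2$ terms. The trade-off is that the paper's local bound generalizes immediately to the interval-by-interval estimates it reuses later (e.g.\ in Section 6), while your partial-summation argument yields the stated inequality with fully tracked constants; both land comfortably inside $3\sqrt t\,(2+\log T)/T$.
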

\begin{proof}
Let $\eta={\rm Im}(\alpha)$. It is well known that $-1/2<\eta<1/2$ (critical band). As $\alpha=\gamma+i \eta$, we see that $|\cos(\alpha \log t) | < \cosh(|\eta| \log t) +\sinh(|\eta| \log t)$, and we get
\begin{align*}
\left| \sum_{\gamma \geq T} \frac{\sinh x \alpha}{\sinh \pi \alpha} \cos(\alpha \log t) \right|
& \leq \sum_{\gamma \geq T} \frac{\sinh x \gamma}{\sinh \pi \gamma} | \cos (\alpha \log t) | \leq \sum_{\gamma \geq T} \frac{\sinh x \gamma}{\sinh \pi \gamma} t^{|\eta|} \\
& \leq \sqrt{t} \sum_{\gamma \geq T} \frac{\sinh x \gamma}{\sinh \pi \gamma} \leq \sqrt{t} \sum_{\gamma \geq T} e^{-(\pi-x)\gamma}.
\end{align*}
As $x$ and $T$ are related by
\[
x=2 \, {\rm arccot} \frac{\log T}{T},
\]
we see that
\[
\pi-x=\frac{2 \log T}{T} - \frac23 \frac{\log^3 T}{T^3} + \mathcal{O} \left( \frac{1}{T^4} \right).
\]
Hence
\[
\left| \sum_{\gamma \geq T} \frac{\sinh x \alpha}{\sinh \pi \alpha} \cos(\alpha \log t) \right| < \sqrt{t} \sum_{\gamma \geq T} \exp{\frac{-2 \gamma \log T}{T}}.
\]
We subdivide the interval into intervals of length $1$. Hence, the left hand side is also less or equal that
\[
\sqrt{t} \left( \sum_{\gamma \in [T, T+1]} \exp{\frac{-2 \gamma \log T}{T}} +
\sum_{\gamma \in [T+1, T+2]} \exp{\frac{-2 \gamma \log (T+1)}{T+1}} + \cdots \right).
\]
From \cite[Corollary 1]{trudgian} we get that for $T \geq 2$ the number of zeros in an interval $[T, T+1]$ is less than $3 \log T$. Hence
\begin{align*}
\left| \sum_{\gamma \geq T} \frac{\sinh x \alpha}{\sinh \pi \alpha} \cos(\alpha \log t) \right| &< \sqrt{t} \sum_{n=T}^{\infty} 3 (\log n) \exp(-2 \log n) 
\\ &< 3 \sqrt{t}  \int_{T-1}^{+\infty} \frac{\log u}{u^2} du < 3 \sqrt{t} \frac{2+\log T}{T},
\end{align*}
which is the stated bound.
\end{proof}

\begin{corollary}
If $T \geq 2$ and $t \geq 2$ is a positive integer number, then
\begin{align}
& \left| -4\pi \sqrt{t} \left(\sum_{\gamma < T} \frac{\sinh x \alpha}{\sinh \pi \alpha} \cos(\alpha \log t) \right) \frac{\log T}{T} + 2\pi \left(t-\frac{1}{t^2-1}\right) \frac{\log T}{T} -\Lambda(t) \right| \nonumber \\ & <  4 \left(4 t^2\log t + \frac{\pi^2}{2} t  + 3\pi t + \frac14 \log t + 0.6 \right) \frac{\log^2 T}{T^2} + 24\pi t \frac{\log T}{T^2}. \label{coro-for3}
\end{align}
\end{corollary}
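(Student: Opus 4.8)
The plan is to deduce \eqref{coro-for3} from Corollary~\ref{mang-cotas} (the integer case) by imposing the relation $\cot\frac{x}{2}=\frac{\log T}{T}$ and then replacing, one at a time, the three ingredients that occur in \eqref{mang-integer} --- the prefactor $\cos^{2}\frac{x}{2}$, the complete sum $\sum_{\gamma>0}$ over the zeros, and the function $g(x,t)$ --- by the quantities appearing in \eqref{coro-for3}, keeping explicit control of the error each replacement introduces. Note first that $\cot\frac{x}{2}=\frac{\log T}{T}>0$ for $T\ge 2$, so $x\in(0,\pi)$ and Corollary~\ref{mang-cotas} applies; moreover $x=2\,{\rm arccot}\frac{\log T}{T}$ exactly as in the proof of Lemma~\ref{lemma-T}.

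For the prefactor, from $\cos^{2}\theta=\cot^{2}\theta/(1+\cot^{2}\theta)$ with $\theta=\frac{x}{2}$ one gets $\cos^{2}\frac{x}{2}<\cot^{2}\frac{x}{2}=\frac{\log^{2}T}{T^{2}}$, so the right-hand side of \eqref{mang-integer} is at most $4\bigl(4t^{2}\log t+\frac{\pi^{2}}{2}t+\frac14\log t+0.6\bigr)\frac{\log^{2}T}{T^{2}}$. For the sum over zeros I would split $\sum_{\gamma>0}=\sum_{0<\gamma<T}+\sum_{\gamma\ge T}$: the first piece is exactly the truncated sum of \eqref{coro-for3} once $\cot\frac{x}{2}$ is written as $\frac{\log T}{T}$, and by Lemma~\ref{lemma-T} the second piece contributes to $-4\pi\sqrt{t}\cot\frac{x}{2}\sum_{\gamma>0}(\cdots)$ an amount bounded in absolute value by
\[
4\pi\sqrt{t}\,\frac{\log T}{T}\cdot 3\sqrt{t}\,\frac{2+\log T}{T}=12\pi t\,\frac{(2+\log T)\log T}{T^{2}}=4\cdot 3\pi t\,\frac{\log^{2}T}{T^{2}}+24\pi t\,\frac{\log T}{T^{2}} ,
\]
which is precisely the additional $3\pi t$ inside the bracket of \eqref{coro-for3} together with its last term $24\pi t\,\frac{\log T}{T^{2}}$.

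For $g(x,t)$, the expansion \eqref{simply-g} gives $g(x,t)=\frac12\bigl(\frac{t+1}{t}-\frac{t}{t^{2}-1}\bigr)\sqrt{t}+\mathcal{O}\bigl(\frac{\pi-x}{t}\bigr)$, and a one-line computation (both sides equal $\frac{t^{3}-t-1}{2\sqrt{t}\,(t^{2}-1)}$) shows $\frac12\bigl(\frac{t+1}{t}-\frac{t}{t^{2}-1}\bigr)\sqrt{t}=\frac{1}{2\sqrt{t}}\bigl(t-\frac{1}{t^{2}-1}\bigr)$, so that
\[
4\pi\sqrt{t}\,g(x,t)\cot\frac{x}{2}=2\pi\Bigl(t-\frac{1}{t^{2}-1}\Bigr)\frac{\log T}{T}+\mathcal{O}\!\left(\frac{(\pi-x)\cot\frac{x}{2}}{\sqrt{t}}\right) .
\]
Since $x=2\,{\rm arccot}\frac{\log T}{T}$ we have $\pi-x=2\arctan\frac{\log T}{T}<\frac{2\log T}{T}$, hence $(\pi-x)\cot\frac{x}{2}<\frac{2\log^{2}T}{T^{2}}$, and the error of this replacement is $\mathcal{O}\bigl(\frac{1}{\sqrt{t}}\,\frac{\log^{2}T}{T^{2}}\bigr)$ --- of strictly lower order in $t$ than the terms already present, hence absorbed.

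Finally, writing $A$ for the parenthesised quantity in \eqref{mang-integer} and $A'$ for the quantity $-4\pi\sqrt{t}\bigl(\sum_{0<\gamma<T}(\cdots)\bigr)\frac{\log T}{T}+2\pi\bigl(t-\frac{1}{t^{2}-1}\bigr)\frac{\log T}{T}$ of \eqref{coro-for3}, the last two replacements bound $|A-A'|$ by the sum of their errors, while Corollary~\ref{mang-cotas} and the first step give $|A-\Lambda(t)|<4\bigl(4t^{2}\log t+\frac{\pi^{2}}{2}t+\frac14\log t+0.6\bigr)\frac{\log^{2}T}{T^{2}}$; the triangle inequality $|A'-\Lambda(t)|\le|A-\Lambda(t)|+|A-A'|$ and the collection of the three contributions then yield \eqref{coro-for3}. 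The substitution and the algebra of the third step are routine; the genuine obstacle is the bookkeeping of the error terms --- in particular checking that the $g$-replacement error and the slack lost in passing from $\cos^{2}\frac{x}{2}$ and from the various termwise bounds of Theorem~\ref{mang-teor} really do assemble, uniformly over integers $t\ge 2$ and $T\ge 2$, inside the explicit constants $3\pi t$ and $0.6$ of \eqref{coro-for3}, rather than merely being $\mathcal{O}(\cdot)$ of the leading term $t^{2}\log t$.
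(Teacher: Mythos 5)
Your proposal is correct and follows exactly the route the paper intends: the paper's own proof is the single sentence ``It is a consequence of Corollary~\ref{mang-cotas} and Lemma~\ref{lemma-T},'' and you have simply supplied the details --- the bound $\cos^{2}\frac{x}{2}<\cot^{2}\frac{x}{2}=\frac{\log^{2}T}{T^{2}}$, the tail estimate from Lemma~\ref{lemma-T} producing the $3\pi t$ and $24\pi t\frac{\log T}{T^{2}}$ terms, and the identification of $4\pi\sqrt{t}\,g(x,t)\cot\frac{x}{2}$ with $2\pi\bigl(t-\frac{1}{t^{2}-1}\bigr)\frac{\log T}{T}$ via \eqref{simply-g}. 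The residual concern you flag (that the $g$-replacement error of order $\frac{1}{\sqrt{t}}\frac{\log^{2}T}{T^{2}}$ must be absorbed into the explicit constants) is real but is equally unaddressed in the paper, so your argument is at least as complete as the original.
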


\begin{proof}
It is a consequence of the Corollary \ref{mang-cotas} and Lemma \ref{lemma-T}.
\end{proof}

\section{Graphics}
We have proved the following good approximation of the Mangoldt's function:
\begin{equation}\label{mang-approx}
\Lambda(t) \approx 4 \pi \sqrt{t} \left( \sum_{0< \gamma < T} T^{-2\gamma/T} \cos(\gamma \log t) \frac{t^{\mu}+t^{-\mu}}{2} \right) \frac{\log T}{T}  + 2 \pi \left(t-\frac{1}{t^2-1} \right) \frac{\log T}{T},
\end{equation}
where $\mu=1/2-\beta$, so $-1/2<\mu<1/2$. We use Sagemath \cite{stein} to draw the graphics. In Figure \ref{mangoldt} we see the graphic obtained with the formula (\ref{mang-approx}) summing over the $10000$ first non-trivial zeros of zeta, that is taking $T=9877.782654004$. The following estimations
\[
\varepsilon(t,T)=\mathcal{O} \left( t^2 \log t \frac{\log^2 T}{T^2} \right),
\qquad \varepsilon(t,T)=\mathcal{O} \left( \frac{t\log 2tT \log \log 3t}{T} \right),
\]
are respectively the errors that we get in the Mangoldt's function for integers $t>1$ if we use either our formula or either the Landau’s formula.

\begin{figure}[H]
\caption{Mangoldt}
\includegraphics[scale=0.75]{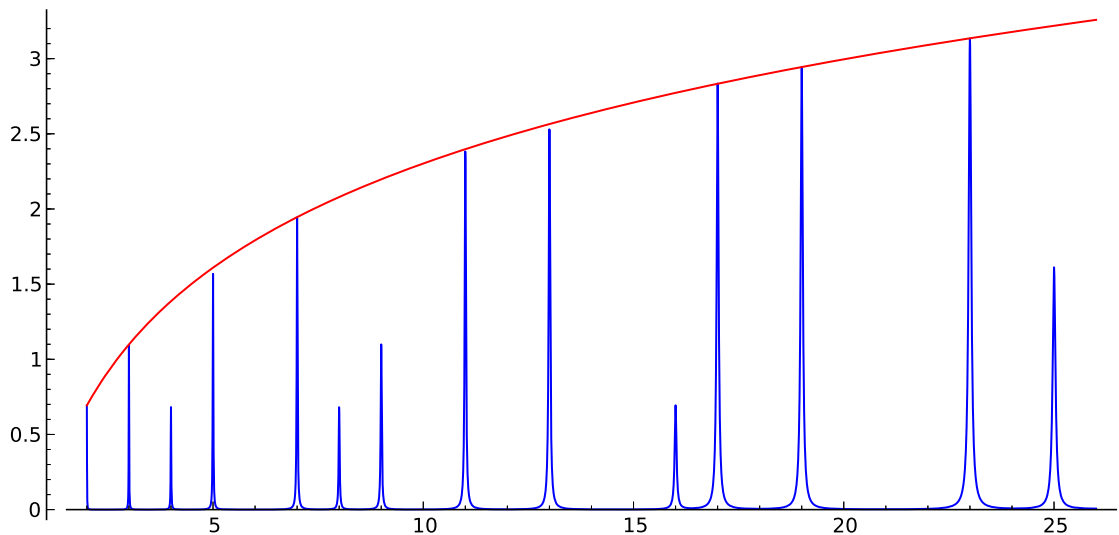}
\label{mangoldt}
\end{figure}
In this figure we have represented the function $\log(t)$ with the color red and the Mangoldt's function $\Lambda(t)$ with color blue.

\section{Another bound}\label{sec-bounds}
In this section we get another bound for
\[
\left| \sum_{\gamma \geq T} \frac{\sinh x \alpha}{\sinh \pi \alpha} \cos(\alpha \log t) \right|,
\]
From (\ref{int-right}) and (\ref{int-left}), we get
\begin{equation}\label{greaterT}
\sum_{\rho} \frac{z^{\rho-\frac12}}{\sin \pi (\rho-\frac12)}-\sum_{|\gamma| < T} \frac{z^{\rho-\frac12}}{\sin \pi (\rho-\frac12)} = \sum_{|\gamma| \geq T} \frac{z^{\rho-\frac12}}{\sin \pi (\rho-\frac12)} = \frac{1}{\pi} \left( I_{r}-I_{\ell}\right),
\end{equation}
where $I_{r}$ and $I_{\ell}$ are the analytic continuation of the integral
\[
I=\frac{1}{2 i} \int \frac{\zeta'(s+\frac12)}{\zeta(s+\frac12)} \frac{z^s}{\sin \pi s} ds,
\]
along the corresponding routes.

\begin{lemma}\label{lema-cota-1} \rm
Let $T>1$, then we have
\begin{align*}\label{zsin-1}
&\left| \frac{z^s}{\sin \pi s} \right| \leq 4 \, e^{\sigma \log |z|} e^{-T(\pi+\arg(z))} \quad \text{if} \quad s=\sigma+iT, \\
&\left| \frac{z^s}{\sin \pi s} \right| \leq 4 \, e^{\sigma \log |z|} e^{-T(\pi-\arg(z))} \quad \text{if} \quad s=\sigma-iT,
\end{align*}
in case that $\sigma>0$ and $|z|<1$ or in case $\sigma<0$ and $|z|>1$.
\end{lemma}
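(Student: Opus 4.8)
The plan is to estimate the two factors $|z^s|$ and $|\sin\pi s|^{-1}$ separately along the horizontal lines ${\rm Im}(s)=\pm T$. For the first factor I will use that by definition $z^s=\exp(s\log z)$ with $\log z=\log|z|+i\arg z$ and $|\arg z|<\pi$; taking real parts of $s\log z$ gives ${\rm Re}\big((\sigma+iT)\log z\big)=\sigma\log|z|-T\arg z$ and ${\rm Re}\big((\sigma-iT)\log z\big)=\sigma\log|z|+T\arg z$, so that $|z^{\sigma\pm iT}|=e^{\sigma\log|z|}e^{\mp T\arg z}$.

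For the second factor I will expand $\sin\pi(\sigma\pm iT)=\sin\pi\sigma\cosh\pi T\pm i\cos\pi\sigma\sinh\pi T$ and use $\cosh^2=1+\sinh^2$ to obtain the identity $|\sin\pi s|^2=\sin^2\pi\sigma+\sinh^2\pi T$, whence $|\sin\pi s|\ge\sinh\pi T$ for $s=\sigma\pm iT$ and $T>0$. Combined with the elementary bound
\[
\frac{1}{\sinh\pi T}=\frac{2e^{-\pi T}}{1-e^{-2\pi T}}\le 4\,e^{-\pi T}\qquad (T>1),
\]
which holds because $e^{-2\pi T}\le e^{-2\pi}<\tfrac12$, this yields $|\sin\pi s|^{-1}\le 4\,e^{-\pi T}$. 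Multiplying the two estimates, the term $\mp T\arg z$ merges with $-\pi T$ to give exactly $4\,e^{\sigma\log|z|}e^{-T(\pi\pm\arg z)}$, as claimed.

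I do not expect a real obstacle here: the argument is a short direct computation. The only genuinely substantive ingredient is the identity $|\sin\pi s|^2=\sin^2\pi\sigma+\sinh^2\pi T$ of the previous paragraph, and the only point demanding a little care is checking that the constant $4$ is admissible for every $T>1$ (in fact any $T$ with $e^{-2\pi T}\le\tfrac12$ would suffice). Finally, I will remark that the hypotheses ``$\sigma>0$ and $|z|<1$'' or ``$\sigma<0$ and $|z|>1$'' are not needed for the displayed inequalities themselves; their role is only that in both regimes $\sigma\log|z|<0$, so $e^{\sigma\log|z|}\le 1$ and, since $|\arg z|<\pi$ makes $\pi\pm\arg z>0$, the right-hand sides decay exponentially in $T$ — which is what will later force $I_{r}$ and $I_{\ell}$ to tend to $0$.
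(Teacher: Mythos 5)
Your proof is correct and follows essentially the same route as the paper: both arguments bound $|z^s|$ by $e^{\sigma\log|z|}e^{\mp T\arg z}$ and bound $|\sin\pi s|$ from below by $\sinh\pi T$ (the paper via the reverse triangle inequality on $|e^{i\pi s}-e^{-i\pi s}|$, you via the identity $|\sin\pi s|^2=\sin^2\pi\sigma+\sinh^2\pi T$), then absorb the factor $2/(1-e^{-2\pi T})$ into the constant $4$. Your closing remark that the sign hypotheses on $\sigma$ and $|z|$ are not needed for the inequalities themselves, but only to make the bound useful later, is also consistent with the paper's proof, which never invokes them.
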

\begin{proof}
\begin{align}
\left| \frac{z^s}{\sin \pi s} \right| &= 2 \left| 
\frac{ e^{(\sigma+iT)(\log|z|+i\arg(z))} }{ e^{i\pi(\sigma+iT)}-e^{-i\pi(\sigma+iT)} } \right| \leq 2 \, \frac{ e^{\sigma \log |z|-T \arg(z)} }{ |e^{-i \pi \sigma} e^{\pi T}|-|e^{i \pi \sigma} e^{-\pi T}| } \nonumber \\ & \leq  2 \, \frac{e^{\sigma \log |z|}e^{-T \arg(z)} }{ e^{\pi T}-e^{-\pi T} } <  4 \, \frac{e^{\sigma \log |z|}e^{-T \arg(z)}}{e^{\pi T}} < 4 \, e^{\sigma \log |z|} e^{-T(\pi+\arg(z))}. \nonumber
\end{align}
The proof for $s=\sigma-iT$ is similar.
\end{proof}

In the following lemma we get bounds of the function $\zeta'(s+1/2)/\zeta(s+1/2)$:
\begin{lemma} \rm
For $\sigma \geq 2$, we have
\[ 
\left| \frac{\zeta'(\sigma+iT)}{\zeta(\sigma+iT)} \right| = \left| \sum_{n=1}^{\infty} \frac{\Lambda(n)}{n^{\sigma+iT}} \right| \leq \sum_{n=1}^{\infty} \left| \frac{\Lambda(n)}{n^{\sigma+iT}} \right|
= \sum_{n=1}^{\infty} \frac{\Lambda(n)}{n^{\sigma}} \leq \sum_{n=1}^{\infty} \frac{\Lambda(n)}{n^2} < 0.57.
\]
For $\sigma < -1$ and $T>1$, using the above bound for $\sigma \geq 2$, the inequalities
\[
\left|\Psi(\sigma+iT)\right| < 3.2 + \frac12 \log (\sigma^2+T^2), \qquad \left|\tan \frac{\pi(\sigma+iT)}{2} \right| < 1.72,
\]
and the functional equation (\ref{zpz}), we get
\[
\left| \frac{\zeta'(\sigma+iT)}{\zeta(\sigma+iT)} \right| < 7.33+\log \sqrt{\sigma^2+T^2} < 7.33 + \log |\sigma| + \log T.
\]
If $-1 < \sigma \leq 2$, then for every real number $T \geq 2$, there exist $T' \in [T, T+1]$ such that uniformly one has 
\[
\left| \frac{\zeta'(\sigma+iT')}{\zeta(\sigma+iT')} \right| < 9 \log^2 T + 2\log T < 11 \log^2 T.
\]
To prove it we first deduce from \cite[Corollary 1]{trudgian} that the number of zeros $\rho$ such that $\gamma \in [T, T+1]$ is less than $ \lfloor 3 \log T \rfloor$. If we subdivide the interval into $1 + \lfloor 3  \log T \rfloor$ equal parts, then the length of each part is $(1+\lfloor 3\log T \rfloor)^{-1}$. As the number of parts exceeds the number of zeros, we deduce applying the Dirichlet pigeon-hole that there is a part that contains no zeros. Hence, for $T'$ lying in this part, we see that
\[
|T' - \gamma | > \frac{1}{1+\lfloor 3\log T \rfloor}.
\]
Hence, we infer that each summand in \cite[Proposition 3.89]{bordelles} is less than $1+\lfloor 3 \log T \rfloor$, and since the number of summands of this kind is less than $\lfloor 3 \log T \rfloor$, we finally get
\[
\left| \frac{\zeta'(\sigma+i T')}{\zeta(\sigma+iT')} \right| < 3 (\log T) \, (1+ 3 \log T).
\]
\end{lemma}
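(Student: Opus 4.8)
The plan is to treat the three ranges of $\sigma$ separately, reducing the difficult central strip to the trivial half-plane $\sigma\ge 2$ via the functional equation, and handling the strip itself by choosing a height that dodges the zeros of $\zeta$. \textbf{For $\sigma\ge 2$.} For $\mathrm{Re}(s)>1$ the Dirichlet series $-\zeta'(s)/\zeta(s)=\sum_{n\ge1}\Lambda(n)n^{-s}$ converges absolutely, so for $s=\sigma+iT$ with $\sigma\ge 2$, and using $\Lambda(n)\ge 0$,
\[
\left|\frac{\zeta'(s)}{\zeta(s)}\right|\le\sum_{n\ge1}\frac{\Lambda(n)}{n^{\sigma}}\le\sum_{n\ge1}\frac{\Lambda(n)}{n^{2}}=-\frac{\zeta'(2)}{\zeta(2)},
\]
and a numerical evaluation of $-\zeta'(2)/\zeta(2)$ gives the bound $<0.57$. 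This is the only entirely elementary case, and it also provides the input for the next one.

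\textbf{For $\sigma<-1$.} Write $w=\sigma+iT$ with $\sigma<-1$ and $T>1$; then $\mathrm{Re}(1-w)=1-\sigma>2$. Applying the functional equation (\ref{zpz}) with $s=1-w$ gives
\[
\frac{\zeta'(w)}{\zeta(w)}=\log2\pi-\psi(1-w)+\frac{\pi}{2}\tan\frac{\pi(1-w)}{2}-\frac{\zeta'(1-w)}{\zeta(1-w)},
\]
and I would bound each term by the triangle inequality: $\log2\pi<1.84$; $|\zeta'(1-w)/\zeta(1-w)|<0.57$ by the previous case, since $\mathrm{Re}(1-w)>2$; $|\tan(\pi(1-w)/2)|<1.72$, which follows from $\tan(\pi z/2)=-i(e^{i\pi z}-1)/(e^{i\pi z}+1)$ and the fact that $|\mathrm{Im}(1-w)|=T>1$ keeps $|e^{i\pi(1-w)}|$ well away from $1$; and $|\psi(1-w)|<3.2+\tfrac12\log((1-\sigma)^2+T^2)$, from Binet's integral for $\psi$ valid on $\mathrm{Re}(z)\ge1$ together with $|\log z|\le\log|z|+\pi/2$. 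Since $1-\sigma=1+|\sigma|<2|\sigma|$ we have $(1-\sigma)^2+T^2<4(\sigma^2+T^2)$, and collecting the explicit constants yields the stated bound $<7.33+\log\sqrt{\sigma^2+T^2}<7.33+\log|\sigma|+\log T$.

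\textbf{For $-1<\sigma\le2$.} Here $\zeta$ vanishes, so $\zeta'/\zeta$ cannot be bounded uniformly on the line $\mathrm{Re}(s)=\sigma+iT$ directly; instead one selects a favourable height $T'\in[T,T+1]$. By the explicit zero-counting estimate \cite[Corollary 1]{trudgian}, for $T\ge2$ there are at most $\lfloor3\log T\rfloor$ zeros $\rho$ with $\gamma\in[T,T+1]$; subdividing $[T,T+1]$ into $1+\lfloor3\log T\rfloor$ equal parts and invoking Dirichlet's pigeonhole principle produces a part free of ordinates of zeros, and choosing $T'$ in it (with a little care about the endpoints) one gets $|T'-\gamma|>(1+\lfloor3\log T\rfloor)^{-1}$ for every relevant zero ordinate $\gamma$. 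Inserting $s=\sigma+iT'$ into the explicit partial-fraction representation of $\zeta'/\zeta$ valid throughout the strip $-1\le\sigma\le2$ (the form in \cite[Proposition 3.89]{bordelles}) writes $\zeta'(s)/\zeta(s)$ as a sum over the $O(\log T)$ zeros near $T'$ of terms $(s-\rho)^{-1}$, each at most $|T'-\gamma|^{-1}<1+\lfloor3\log T\rfloor$, plus an analytic term of size $O(\log T)$; counting the summands and tracking the explicit constants then gives $|\zeta'(\sigma+iT')/\zeta(\sigma+iT')|<9\log^2 T+2\log T<11\log^2 T$, uniformly for $\sigma\in(-1,2]$.

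The hard part will be the strip $-1<\sigma\le2$: it requires \emph{explicit} versions of both the partial-fraction formula for $\zeta'/\zeta$ and the zero-counting bound, and every constant must be followed through the pigeonhole selection of $T'$ and the final summation so that the clean estimates $9\log^2 T+2\log T$ and $11\log^2 T$ genuinely emerge. The other two cases are routine once these explicit inputs, together with the numerical values of $-\zeta'(2)/\zeta(2)$ and of the constants entering the $\psi$- and $\tan$-bounds, are available.
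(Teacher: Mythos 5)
Your proposal follows essentially the same route as the paper in all three ranges: the absolutely convergent Dirichlet series and the numerical value of $-\zeta'(2)/\zeta(2)$ for $\sigma\ge 2$, the functional equation (\ref{zpz}) combined with the digamma and tangent bounds for $\sigma<-1$, and the Trudgian zero-count plus pigeonhole selection of $T'$ fed into the partial-fraction formula of \cite[Proposition 3.89]{bordelles} for the strip. The only differences are cosmetic (you justify the $\psi$ and $\tan$ bounds a little more explicitly than the paper, which simply asserts them), so the approaches coincide.
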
 
\noindent Remark: As
\[
\left| \sum_{\gamma \in [T, T+1]} \frac{\sinh x \alpha}{\sinh \pi \alpha} \cos(\alpha \log t) \right| \leq  \sqrt{t} \sum_{\gamma \in [T, T+1]} \exp \frac{-2 \gamma \log T}{T} < 3 \sqrt{t} \, \frac{\log T}{T^2},
\]
the error that we are making in the above left sum when we take $T$ instead of $T'$ is less than $3 \sqrt{t} \, T^{-2} \log T$.

\begin{corollary}\label{lema-cota-integral}
If $x$ and $T$ are related by
\[
\cot \frac{x}{2} = \frac{\log T}{T},
\]
then for $t \geq 2$ and $T \geq 2$, we have
\[
|I_{r}-I_{\ell}| < 44 \, \frac{t^{3/2}+1}{\log t} \frac{\log^2 T}{T^2}.
\]
\end{corollary}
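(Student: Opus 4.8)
\emph{Overview.} We want a bound on $|I_r-I_\ell|$, where $I_r,I_\ell$ are as in (\ref{greaterT}), i.e.\ the analytic continuations of $I=\frac1{2i}\int\frac{\zeta'(s+\frac12)}{\zeta(s+\frac12)}\frac{z^s}{\sin\pi s}\,ds$ along the two routes, evaluated at the value(s) of $z$ relevant to the Mangoldt formula (so $|z|\in\{t,t^{-1}\}$ and $|\arg z|=x$). The plan is to estimate $I_r$ and $I_\ell$ directly from their definitions as contour integrals, using Lemma~\ref{lema-cota-1} for the factor $z^s/\sin\pi s$ and the lemma preceding this corollary for $\zeta'(s+\frac12)/\zeta(s+\frac12)$, and to substitute the relation $\cot\frac x2=\frac{\log T}{T}$ only at the very end.

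\emph{Reduction to the horizontal sides.} Since $I_2=I_5=0$ we have $I_r-I_\ell=I_1+I_3-I_4-I_6$, where $I_1,I_4$ lie on the line $\mathrm{Im}(s)=T$ (over $\sigma\ge-\frac12$ and $\sigma\le-\frac12$ respectively) and $I_3,I_6$ on $\mathrm{Im}(s)=-T$. Each horizontal piece is read in the half-plane of $z$ in which it converges — $|z|<1$ for $I_1,I_3$ and $|z|>1$ for $I_4,I_6$ — the two being tied together for a common $z$ by analytic continuation (equivalently by the $z\mapsto 1/z$ symmetry of the zero-sum $\sum_{|\gamma|\ge T}z^{\rho-1/2}/\sin\pi(\rho-1/2)$, which turns it into its negative), so it suffices to estimate the four integrals in their natural regimes. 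On each of them I would first move the height from $T$ to a nearby $T'\in[T,T+1]$, chosen by the Dirichlet pigeon-hole argument of the preceding lemma so that $|\zeta'(\sigma+\tfrac12+iT')/\zeta(\sigma+\tfrac12+iT')|$ is $<0.57$ for $\sigma\ge\tfrac32$, $<11\log^2T$ for $-\tfrac32<\sigma\le\tfrac32$, and $<7.33+\log|\sigma+\tfrac12|+\log T$ for $\sigma<-\tfrac32$; the error caused by this shift is $O(t^{1/2}T^{-2}\log T)$ by the Remark preceding the corollary and is absorbed into the final bound.

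\emph{Core estimate.} On each horizontal ray, split $\sigma$ at $0$: where Lemma~\ref{lema-cota-1} applies ($\sigma>0$ together with $|z|<1$, or $\sigma<0$ together with $|z|>1$) bound $|z^s/\sin\pi s|\le 4\,e^{\sigma\log|z|}e^{-T(\pi\mp\arg z)}$, and on the remaining bounded segment $\sigma\in[-\tfrac12,0]$ bound $z^s/\sin\pi s$ by hand, still pulling out a factor $e^{-T(\pi-|\arg z|)}$ times an absolutely bounded quantity. Now integrate in $\sigma$: the factor $e^{\sigma\log|z|}$ makes the integral converge, the dominant part coming from the middle strip $-\tfrac32<\sigma\le\tfrac32$, where $\zeta'/\zeta=O(\log^2T)$ and $\int e^{\sigma\log|z|}\,d\sigma$ over the relevant range is $O\big(t^{1/2}/\log t\big)$ (it becomes $O(t^{3/2}/\log t)$ if one is not careful about the lower endpoint, which is where the jump from $t^{1/2}$ to $t^{3/2}$ in the stated bound is paid for), while the two tails contribute only the lower-order $O(1/\log t)$ and $O(\log T/\log t)$ pieces (whence the harmless ``$+1$'' in $t^{3/2}+1$). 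Collecting, each of $|I_1|,|I_3|,|I_4|,|I_6|$ is $\ll\frac{t^{3/2}}{\log t}(\log^2T)\,e^{-T(\pi-x)}$, the two rays whose exponent is $\pi+x$ rather than $\pi-x$ being exponentially negligible.

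\emph{Conclusion and main obstacle.} Finally, inserting $x=2\,\mathrm{arccot}\frac{\log T}{T}$ gives $\pi-x=\frac{2\log T}{T}-\frac23\frac{\log^3T}{T^3}+O(T^{-4})$, hence $e^{-T(\pi-x)}=T^{-2}\exp\!\big(\tfrac23\tfrac{\log^3T}{T^2}+O(T^{-3})\big)$, which for $T\ge2$ is at most a small absolute constant times $T^{-2}$; adding the four pieces together with the $T\mapsto T'$ correction and absorbing the constants yields $|I_r-I_\ell|<44\,\frac{t^{3/2}+1}{\log t}\,\frac{\log^2T}{T^2}$. I expect the genuine difficulty to be twofold. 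First, making the analytic-continuation step honest, i.e.\ actually showing that $I_r-I_\ell$ has size only $T^{-2}$: this reflects cancellation inside $\sum_{|\gamma|\ge T}z^{\rho-1/2}/\sin\pi(\rho-1/2)$ that a term-by-term absolute estimate would lose by a factor $\sim T$, and it has to be captured by keeping the two horizontal integrals paired as a single object (or by deforming the divergent horizontal ray into one along which $z^s/\sin\pi s$ decays, crossing only residues already accounted for in $\sum_{|\gamma|<T}$). Second, the numerical bookkeeping needed to close everything up to the explicit constant $44$, where the elementary expansion of $\pi-x$ and the $\sigma$-integrals are routine but the available slack must be tracked carefully for $t\ge2$ and $T\ge2$.
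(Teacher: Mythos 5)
Your outline is the paper's own argument: $I_1,I_4$ are discarded as $O(e^{-T(\pi+x)})$, and $I_3,I_6$ are bounded term by term in $\sigma$ by combining Lemma \ref{lema-cota-1} with the three-regime bounds on $\zeta'(s+\tfrac12)/\zeta(s+\tfrac12)$ from the preceding lemma, after which $e^{-T(\pi-x)}\asymp T^{-2}$ is inserted via $\pi-x=\tfrac{2\log T}{T}+O(T^{-3}\log^3 T)$. Two points in your description of the mechanism are off, though neither affects the final bound. First, the $t^{3/2}$ is not an artifact of carelessness at a lower endpoint: the dominant term is $44\,e^{-T(\pi-x)}\log^2 T\int_{-1/2}^{3/2}t^{\sigma}\,d\sigma$ on $I_3$, whose size $44\,t^{3/2}\log^2 T/\log t$ comes from the \emph{upper} endpoint $\sigma=\tfrac32$ of the middle strip, while the ``$+1$'' covers $I_6$, which contributes only $O(t^{-1/2}\log^2 T/\log t)$. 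Second, no cancellation inside $\sum_{|\gamma|\ge T}z^{\rho-1/2}/\sin\pi(\rho-\tfrac12)$ is invoked anywhere: the factor $\asymp T/\log T$ gained over Lemma \ref{lemma-T} appears automatically because the contour sits at height exactly $T$, so a single factor $e^{-T(\pi-x)}$ multiplies an absolutely convergent $\sigma$-integral, whereas summing $e^{-\gamma(\pi-x)}$ over all $\gamma\ge T$ costs an extra $(\pi-x)^{-1}$. The step you rightly single out as shakiest --- that $I_1,I_3$ diverge for $|z|=t>1$ and must be read by analytic continuation --- is treated in the paper only formally, by evaluating $\int_{3/2}^{\infty}t^{\sigma}\,d\sigma$ where it converges and continuing the closed form $-t^{3/2}/\log t$ to $t>1$; your suggestion of deforming the divergent ray would be one way to make this honest, but the paper does not carry it out.
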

\begin{proof}
As $x$ and $T$ are related by
\[
x=2 \, {\rm arccot} \frac{\log T}{T},
\]
we see that
\[
\pi-x=\mathcal{O}\left( \frac{2 \log T}{T}\right), \qquad e^{-T(\pi-x)} = \mathcal{O}\left(\frac{1}{T^2}\right).
\]
Let $x \in [0, \pi)$, replacing $z$ with $e^{x-i\log t}$, we see that $|z|=t$ and $\arg(z)=x$. Hence
\[
|I_3| < 44 (\log^2 T) \, e^{-T(\pi-x)} \int_{-\frac12}^{\frac32} \, e^{\sigma \log t}  d \sigma + 2.28 \, e^{-T(\pi-x)} \int_{\frac32}^{+\infty} \, e^{\sigma \log t}  d \sigma.  
\]
As we can generalize the integral for $|z|=t>1$ by analytic continuation, for $t \geq 2$ and $T \geq 2$, we get
\[
|I_3| < e^{-T(\pi-x)} \left[ 44 \log^2 T \left( \frac{t^{3/2}}{\log t} - \frac{t^{-1/2}}{\log t} \right) - 2.28 \frac{t^{3/2}}{\log t}  \right].
\]
Hence
\[
|I_3| < 44 \, \frac{t^{3/2}}{\log t} \, \frac{\log^2 T}{T^2}.
\]
For $|I_6|$, we have
\begin{align*}
|I_6| &< 4 e^{-T(\pi-x)} \int_{-\infty}^{-\frac32} \left(7.33 + \log|\sigma|\right) e^{\sigma \log t} d \sigma + 
4 e^{-T(\pi-x)} \log T \int_{-\infty}^{-\frac32} e^{\sigma \log t} d \sigma
\\ & \quad + 44 e^{-T(\pi-x)} \log^2 T \int_{-\frac32}^{-\frac12} e^{\sigma \log t} d \sigma,
\end{align*}
and as $\log |\sigma| < |\sigma|$, and extending the integrals by analytic continuation, for $t \geq 2$ and $T \geq 2$, we get
\[
|I_6| < e^{-T(\pi-x)} \left[\frac{29.4 t^{-3/2}}{\log t} + \frac{6t^{-3/2}}{\log t} -\frac{t^{-3/2}}{\log^2 t} + \frac{4 (\log T) t^{-3/2}}{\log t} + 44(\log^2 T)  \left( \frac{t^{-1/2}}{\log t} -\frac{t^{-3/2}}{\log t} \right) \right].
\]
Hence, for $t \geq 2$ and $T \geq 2$, we have
\[
|I_6| < \frac{\log^2 T}{T^2} \, \frac{44}{\sqrt{t} \log t}
\]
In a similar way we can evaluate the order of $|I_1|$ and $|I_4|$, and we get that they are of order much smaller. 
\end{proof}

\begin{corollary}
For $T \geq 2$ and integers $t \geq 2$, we have
\[
\left| \sum_{\gamma \geq T} \frac{\sinh x \alpha}{\sinh \pi \alpha} \cos(\alpha \log t) \right| < 45 \, \frac{t^{3/2}}{\log t} \frac{\log^2 T}{T^2}.
\]
\end{corollary}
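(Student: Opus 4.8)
The plan is to feed the estimate of Corollary~\ref{lema-cota-integral} into the contour-integral representation~(\ref{greaterT}) of the tail $\sum_{|\gamma|\ge T}$. The first task is to rewrite the sum appearing in the statement in terms of $I_{r}-I_{\ell}$. In~(\ref{greaterT}) I take $z=t\,e^{ix}$, so that $|z|=t$ and $\arg z=x$ (the substitution already used in the proofs of Theorems~\ref{main2-thm} and~\ref{theor-mang}). Grouping each zero $\rho$ with its reflection $1-\rho$ collapses the left-hand side of~(\ref{greaterT}) to $2\sum_{\gamma\ge T}\sin(\alpha\log z)/\sinh\pi\alpha$, since $\rho-\tfrac12\mapsto-(\rho-\tfrac12)$ under the reflection and $\sin(\alpha\log z)/\sinh\pi\alpha$ is even in $\alpha$. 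Writing $\log z=\log t+ix$ and multiplying by $-i$,
\[
\frac{-i\,\sin(\alpha\log z)}{\sinh\pi\alpha}=\frac{\sinh(x\alpha)\cos(\alpha\log t)}{\sinh\pi\alpha}-i\,\frac{\cosh(x\alpha)\sin(\alpha\log t)}{\sinh\pi\alpha};
\]
now a second pairing, of $\rho$ (with $\gamma>0$) with $1-\bar\rho$ (same ordinate, $\alpha\mapsto\bar\alpha$), makes the first term contribute a real quantity and the second a purely imaginary one, so that on taking real parts in~(\ref{greaterT}) — with the horizontal sides of the contour moved to the ordinates $\pm T'$, where $T'\in[T,T+1]$ is the ordinate supplied by the lemma bounding $\zeta'/\zeta$ — I obtain
\[
\sum_{\gamma\ge T'}\frac{\sinh x\alpha}{\sinh\pi\alpha}\cos(\alpha\log t)=\frac{1}{2\pi}\,\mathrm{Im}\,(I_{r}-I_{\ell}).
\]
This identity, rather than any estimate, is where I expect the work to be: it requires bookkeeping of the principal branch of $z^{s}$, of the factor $-i/2$ and of the two distinct zero-pairings, and a check that the same $T'$ may be used on both sides $+iT'$ and $-iT'$ of the contour, which is immediate from the symmetry of the zeros about the real axis.

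With the identity in hand, Corollary~\ref{lema-cota-integral} gives
\[
\Bigl|\sum_{\gamma\ge T'}\frac{\sinh x\alpha}{\sinh\pi\alpha}\cos(\alpha\log t)\Bigr|\le\frac{1}{2\pi}\,|I_{r}-I_{\ell}|<\frac{22}{\pi}\,\frac{t^{3/2}+1}{\log t}\,\frac{\log^{2}T}{T^{2}}.
\]
To replace $T'$ by $T$ I apply the triangle inequality and bound the missing block $\sum_{T\le\gamma<T'}$, which is contained in $\sum_{\gamma\in[T,T+1]}$, by the Remark following that same lemma, namely $\bigl|\sum_{\gamma\in[T,T+1]}(\sinh x\alpha/\sinh\pi\alpha)\cos(\alpha\log t)\bigr|<3\sqrt{t}\,(\log T)/T^{2}$. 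Hence
\[
\Bigl|\sum_{\gamma\ge T}\frac{\sinh x\alpha}{\sinh\pi\alpha}\cos(\alpha\log t)\Bigr|<\frac{22}{\pi}\,\frac{t^{3/2}+1}{\log t}\,\frac{\log^{2}T}{T^{2}}+3\sqrt{t}\,\frac{\log T}{T^{2}}.
\]

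Finally I reduce both terms to the shape $\dfrac{t^{3/2}}{\log t}\dfrac{\log^{2}T}{T^{2}}$. For an integer $t\ge2$ one has $t^{3/2}+1\le(1+2^{-3/2})\,t^{3/2}$, so the first term is below $10\,\dfrac{t^{3/2}}{\log t}\dfrac{\log^{2}T}{T^{2}}$; and since $\log t\le t/e$ for $t>0$ and $\log T\ge\log 2$ for $T\ge2$,
\[
3\sqrt{t}\,\frac{\log T}{T^{2}}=3\,\frac{\log t}{t}\cdot\frac{t^{3/2}}{\log t}\cdot\frac{\log T}{T^{2}}\le\frac{3}{e\log 2}\,\frac{t^{3/2}}{\log t}\,\frac{\log^{2}T}{T^{2}}<1.6\,\frac{t^{3/2}}{\log t}\,\frac{\log^{2}T}{T^{2}}.
\]
Adding, the bound is below $45\,\dfrac{t^{3/2}}{\log t}\dfrac{\log^{2}T}{T^{2}}$, with plenty of room. (The argument goes through verbatim for real $t\ge2$; it is stated for integers because that is the case relevant to the Mangoldt function.)
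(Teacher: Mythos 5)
Your proposal is correct and takes essentially the same route the paper intends: the corollary is stated there without an explicit proof, as an immediate consequence of (\ref{greaterT}), Corollary \ref{lema-cota-integral} and the remark on passing from $T'$ to $T$, and you have supplied exactly those steps — the two zero-pairings that turn $\frac{1}{2\pi}\,\mathrm{Im}(I_{r}-I_{\ell})$ into the real sum $\sum_{\gamma\geq T'}\sinh(x\alpha)\cos(\alpha\log t)/\sinh\pi\alpha$, and the $[T,T']$ block estimate. Your constant bookkeeping (about $11.6$ in place of $45$) is tighter than the paper's, which only confirms the stated inequality.
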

\noindent Compare this bound with that of (\ref{cota}).

\section{On the spectrum of the primes}
The Fourier transform of the Landau formula leads to the following function with peaks at the non-trivial zeros of zeta \cite{Mazur-Stein}:
\[
\Phi_1(t)=-\sum_{m=1}^{T} \frac{\Lambda(m)}{\sqrt{m}} \cos(t \log m).
\]
We have proved the following good approximation for the Mangoldt's function:
\begin{equation}\label{mang-appr}
\frac{\Lambda(t)}{\sqrt{t}} \approx -4\pi \frac{\log T}{T} \sum_{\gamma>0} T^{-2\gamma/T}\cos(\gamma \log t)\, \frac{t^{\mu}+t^{-\mu}}{2} + 2\pi \frac{\log T}{T} \left( \sqrt{t} - \frac{1}{\sqrt{t}(t^2-1)} \right),
\end{equation}
for $T$ sufficiently large, where $\mu=1/2-\beta$, so $-1/2 < \mu < 1/2$. 
Inspired by this formula we construct and study the graphic of the function
\[
\Phi_2(t)=-\sum_{m=1}^{T} T^{-m/T} \frac{\Lambda(m)}{\sqrt{m}} \cos(t \log m) + C \, \sqrt{t},
\]
where $C \approx 0.12$ is a constant. Below we show together two graphics of $\Phi_1(t)$ (in blue) and $\Phi_2(t)$ (in red). We have taken $T=300$.

\begin{figure}[H]
\caption{Peaks at the $\gamma$'s, range 3-30}
\includegraphics[scale=0.75]{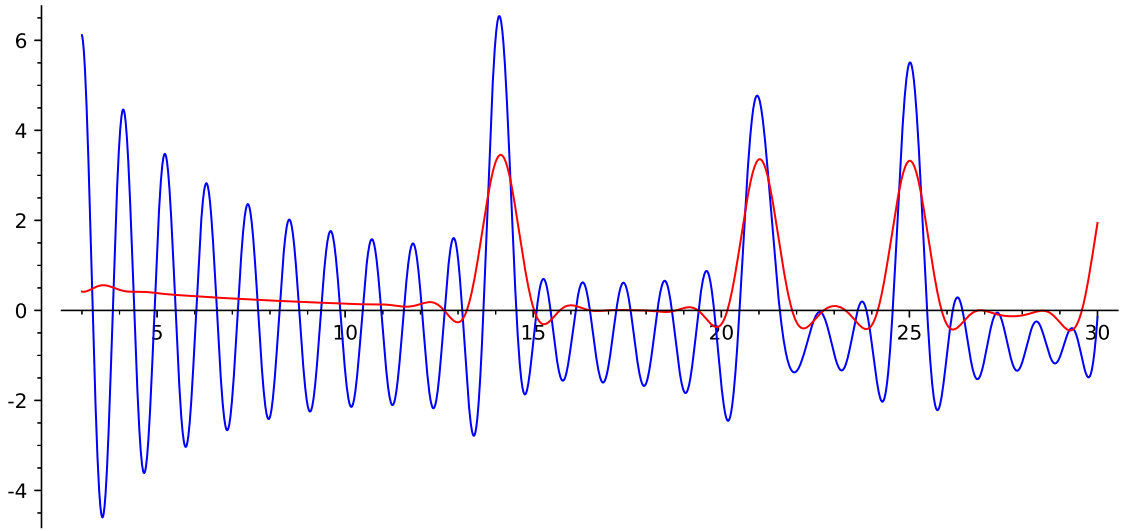}
\label{peaks-zeros-zeta-1}
\end{figure}

\begin{figure}[H]
\caption{Peaks at the $\gamma$'s, range 23-50}
\includegraphics[scale=0.75]{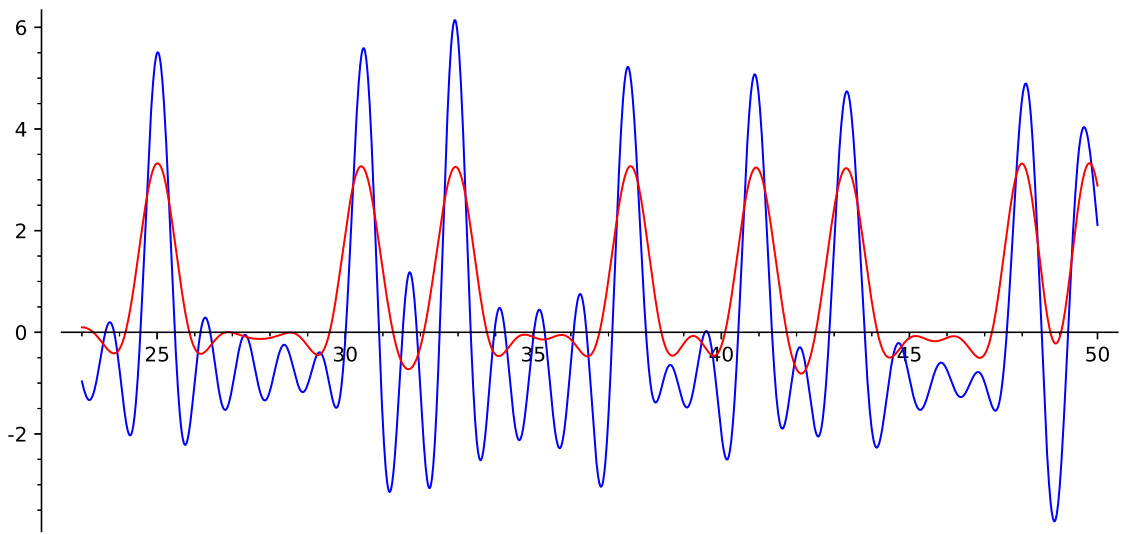}
\label{peaks-zeros-zeta-2}
\end{figure}

We observe that our function $\phi_2(t)$ looks nice. It looks like that this function is interesting and I will continue investigating it.

\section*{Final Remark}
In this paper we have continued our research initiated in \cite{gui} concerning the Mangoldt's function. However this paper is self-contained. In \cite{gui} we also got some new formulas for the Moebius' $\mu$ and Euler's $\varphi$ functions but we only gave the error in the variable $T$ and not its dependence on the variable $t$. In our opinion finding $\varepsilon(T,t)$ could be interesting. This has been done in this paper but only for the Mangoldt's function. In addition we have discovered a new function for the spectrum of the primes, which looks nice.

\section*{Acknowledgements}
Thanks  a lot to Olivier Bordellès for inform me that an upper bound for the number of zeros of zeta such that $\gamma \in [T, T+1]$ can be obtained from \cite[Corollary 1]{trudgian}. Also, many thanks to Juan Arias de Reyna for very interesting comments.

\end{document}